\newcommand{\dd}{\mathrm{d}}
\newcommand{\bbR}{\mathbb{R}}
\newcommand{\tsfrac}[2]{{\textstyle \frac{#1}{#2}}}
\newcommand{\twobyone}[2]{\begin{array}{c} #1 \\ #2 \end{array}}
\newcommand{\twobytwo}[4]{\begin{array}{cc} #1 & #2 \\ #3 & #4 \end{array}}
\newtheorem{theorem}{Theorem}[section]
\newtheorem{lemma}[theorem]{Lemma}
\newtheorem{corollary}[theorem]{Corollary}
\begin{document}

\begin{frontmatter}

\title{Efficiency and computability of MCMC with Langevin, Hamiltonian, and other matrix-splitting proposals}
\runtitle{Efficiency of MCMC}


\begin{aug}
\author{\fnms{Richard A.} \snm{Norton}\corref{}\ead[label=e1]{richard.norton@otago.ac.nz}}
\and
\author{\fnms{Colin} \snm{Fox}\ead[label=e2]{colin.fox@otago.ac.nz}}

\runauthor{R. A. Norton and C. Fox}

\affiliation{University of Otago}
\address{Department of Physics\\University of Otago\\PO Box 56\\Dunedin 9054\\ New Zealand\\ \printead{e1,e2}}
\end{aug}

\begin{abstract}
We analyse computational efficiency of Metropolis-Hastings algorithms with AR(1) process proposals.  These proposals include, as a subclass, discretized Langevin diffusion (e.g. MALA) and discretized Hamiltonian dynamics (e.g. HMC).  

By including the effect of Metropolis-Hastings we extend earlier work by Fox and Parker, who used matrix splitting techniques to analyse the performance and improve efficiency of AR(1) processes for targeting Gaussian distributions.

Our research enables analysis of MCMC methods that draw samples from non-Gaussian target distributions by using AR(1) process proposals in Metropolis-Hastings algorithms, by analysing the matrix splitting of the precision matrix for a local Gaussian approximation of the non-Gaussian target.
\end{abstract}

\begin{keyword}[class=MSC]
\kwd[Primary ]{60J22, 65Y20}
\kwd{}
\kwd[; secondary ]{60J05, 62M05, 65C40, 68Q87, 68W20}
\end{keyword}

\begin{keyword}
\kwd{Markov chain, Monte Carlo, sampling, Gaussian, multivariate normal distribution, AR(1) process, Metropolis-Hastings algorithm, Metropolis-adjusted Langevin algorithm, Hybrid Monte Carlo}
\kwd{}
\end{keyword}

\end{frontmatter}

\section{Introduction}
\label{sec intro}

Many existing Metropolis-Hastings (MH) algorithms for sampling from a non-Gaussian target distribution $\pi$ use an AR(1) process proposal; given current state $x \in \bbR^d$ the proposal $y \in \bbR^d$ is given by
\begin{equation}
\label{eq ar1}
	y = G x + g + \nu
\end{equation}
where $G \in \bbR^{d\times d}$ is the iteration matrix, $g \in \bbR^d$ is a fixed vector and $\nu$ is an i.i.d. draw from $N(0,\Sigma)$.  In general, $G$, $g$ and $\Sigma$ may depend on $x$.  The MH acceptance probability is
$$
	\alpha(x,y) = 1 \wedge \frac{\pi(y) q(y,x)}{\pi(x) q(x,y)}
$$
where $\pi(x)$ denotes the target probability density function and $q(x,\dd y) = q(x,y) \dd y$ is the transition kernel for the proposal $y$ given current state $x$.  

An important case of \eqref{eq ar1} is when the AR(1) process comes from a matrix splitting of some matrix $\mathcal{A}$ associated with $\pi$ at $x$, for example when $\mathcal{A}$ is the local Hessian of $\log \pi$, or its inverse, or an approximation to these matrices.  In this case we find it more convenient and revealing to write \eqref{eq ar1} in terms of the matrix splitting $\mathcal{A} = M-N$ so that $y$ is given by solving
\begin{equation}
\label{eq ar1b}
	M y = N x + \beta + \nu
\end{equation}
where $\beta$ is a vector and $\nu$ is an i.i.d. draw from $N(0,M^T+N)$, $G = M^{-1}N$, $g = M^{-1}\beta$ and $\Sigma = M^{-1}(M^T + N) M^{-T}$.  In fact, if the spectral radius of $G$ is less than $1$ then the AR(1) processes \eqref{eq ar1} can be written in terms of a matrix splitting and vice versa, see Section \ref{sec equiv}.  Moreover, if the spectral radius of $G$ is less than $1$, then the proposal chain generated by \eqref{eq ar1b} will converge to $N(\mathcal{A}^{-1}\beta,\mathcal{A}^{-1})$, which we call the \emph{proposal target distribution}, see \cite{FP2015}.  

Later, we see that the Metropolis-adjusted Langevin algorithm (MALA) is an example of an AR(1) process proposal with a corresponding matrix splitting, as are other discretized Langevin diffusion proposals, as are also the Hybrid Monte Carlo algorithm (HMC) and other discretized Hamiltonian dynamics proposals.  We will use this observation to further analyse these methods and discuss their computability in Section \ref{sec examples}.

Although identifying \eqref{eq ar1} with \eqref{eq ar1b} is useful for analysing existing methods, if the task is designing a proposal for the MH algorithm then using \eqref{eq ar1b} is more natural because we can begin by choosing $\mathcal{A}$ and $\beta$.  For example, we are particularly interested in the proposal where $\mathcal{A}$ and $\beta$ are chosen such that $N(\mathcal{A}^{-1} \beta,\mathcal{A}^{-1})$ is a local Gaussian approximation to $\pi$, i.e. $-\tsfrac{1}{2} x^T \mathcal{A} x + \beta^T x$ is a local quadratic approximation to $\log \pi$.  

After selecting $\mathcal{A}$ and $\beta$, we must also choose the splitting $M$ and $N$.  This choice will effect the computational cost of each proposal, as well as how fast the AR(1) process will converge to the proposal target distribution.

We may use one or more iterations of the AR(1) process \eqref{eq ar1b} as a proposal for the MH algorithm.  For the next proposal (if the previous one was accepted) we then use a new local Gaussian approximation to $\pi$.  This idea mimics the design of some optimizers where a local quadratic approximation to the objective function is minimized at each iteration, see for example \cite{NW1999}.  

Our concern is the analysis of Markov chain Monte Carlo (MCMC) methods with AR(1) process proposals which fall into three cases.  
\begin{enumerate}
\item The target and proposal target distributions are the same, i.e. $\pi = N(\mathcal{A}^{-1}\beta,\mathcal{A}^{-1})$ and the AR(1) process targets $\pi$ directly.  Then \eqref{eq ar1} is a fixed-scan Gibbs sampler for $\pi$, and the accept/reject step in the MH algorithm is redundant.  Fox and Parker \cite{F2013,FP2015,FP2014} have already studied this case. They showed how to choose $M$ and $N$ (equivalently choose $G$ and $\Sigma$) to obtain an efficient algorithm and they also showed how to accelerate the AR(1) process to achieve even greater efficiency.
\item The target distribution $\pi$ is normal, $N(A^{-1}b,A^{-1})$ for some matrix $A$ and vector $b$, but not targeted by the AR(1) process, i.e. $A \neq \mathcal{A}$ and/or $b \neq \beta$.  Then a MH accept/reject step is required to ensure $\pi$ is targeted.
\item The target distribution $\pi$ is not normal, hence the AR(1) process does not target $\pi$, and a MH accept/reject step is required to ensure $\pi$ is targeted.  The proposal target is a local Gaussian approximation to $\pi$.  
\end{enumerate}

Cases 2 and 3 where the target and proposal target distributions are different are harder to analyse than case 1 as the MH accept/reject step affects the transition kernel.  This begs the following questions in these cases: 
\begin{itemize}
\item
How should we choose $\mathcal{A}$, $\beta$, $M$ and $N$ to construct a computationally efficient algorithm?
\item
Can we accelerate MH algorithms with AR(1) process proposals for non-Gaussian target distributions?
\end{itemize}

We measure the quality of a sampling algorithm by its computational efficiency, which is a combination of the compute time required for the Markov chain to reach equilibrium (burn in), and once in equilibrium, the compute time required to produce quasi-independent samples.  Simply, we are interested in getting the most `bang for buck', where `bang' is the number of quasi-independent samples and `buck' is the computing cost.

Fox and Parker \cite{FP2015} mainly considered distributional convergence from an arbitrary state, so they measured burn in.  Here we are concerned about the compute time to produce quasi-independent samples.  The integrated autocorrelation time (length of Markov chain with variance reducing power of one independent sample) is a proxy for independence, hence we try to measure compute time per integrated autocorrelation time.

In this article we focus on analysing case 2 above where $\pi$ is the normal $N(A^{-1}b,A^{-1})$, but not the same as the proposal target $N(\mathcal{A}^{-1}\beta,\mathcal{A}^{-1})$.  However, our results are relevant to case 3 where $\pi$ is not normal because the cases share important features.  In particular, the accept/reject step in the MH algorithm is used in both cases to correct the difference between the target and proposal target distributions.  The simplification of only considering case 2 makes the analysis of the transition kernel possible, whilst keeping the essential feature of the non-Gaussian target case, that the spectrum of the transition kernel has been changed by the MH algorithm.  If we cannot analyse or accelerate case 2 then we do not expect to be able to analyse or accelerate case 3.  

Moreover, in case 3 we might construct an inhomogeneous Markov chain by updating the proposal rule at each iteration (or every few iterations) by updating the local Gaussian approximation of the target distribution, hence we are interested in the local behaviour of the algorithm where the target distribution is approximately normal.

Our analysis of case 2 is for the special case when the matrices $M$ and $N$ are functions of $A$.  This allows us to simultaneously diagonalise both the AR(1) process and the target distribution with a change of variables.  This is not an overly restrictive condition if factorizing the matrix $A$ is infeasible, and we will see below that it includes several important examples of MH algorithms already in use.

Our analysis is also analogous to the analysis of optimizers where it is useful to test the behaviour of optimizers on quadratic cost functions (in the probability sense, a Gaussian is equivalent to a quadratic function since $\log \pi$ is quadratic when $\pi$ is Gaussian).

Acceleration techniques for AR(1) processes for sampling in case 1 (normal target) do not necessarily accelerate sampling in case 3 (non-normal target) when the accelerated AR(1) process is used as a proposal in the MH algorithm.  Goodman and Sokal \cite{GS1989} accelerated Gibbs sampling of normal distributions using ideas from multigrid linear solvers, but only observed modest efficiency gains in their non-normal examples (exponential distributions with fourth moments).  Green and Han \cite{GH1992} applied successive-over-relaxation to a local Gaussian approximation of a non-Gaussian target distribution as a proposal for the MH algorithm.  Again, they did not observe significant acceleration in the non-normal target case.

By recognising that MH algorithms with discretized Langevin diffusion or discretized Hamiltonian dynamics proposals are examples of MH algorithms with AR(1) process proposals, we are able to apply our theory to these MCMC methods.  We find that we can replicate existing results in the literature for these methods, and we can also extend existing results in some cases, see Section \ref{sec examples}.  We also discuss the computational efficiency of these methods.

Strictly, our analysis is exact for Gaussian target distributions, but it can also be applied to the case when $\pi$ is absolutely continuous with respect to a Gaussian (see e.g. \cite{S2010}, \cite{BRS2009} or \cite{BRSV2008}).  

Sampling from a Gaussian (or a target that is absolutely continuous with respect to a Gaussian) is also a problem that arises in inverse problems where the forward model has the form $y = F x + n$ where $y$ is the observed data, $F$ is a linear (or bounded for absolutely continuous with respect to a Gaussian) operator, $x$ is the unknown image with Gaussian prior distribution, and $n$ is Gaussian noise, see e.g. \cite[Example 6.23]{S2010}.  If the posterior is Gaussian with hyperparameters satisfying some other marginal distribution, and we can sample the hyperparameters independently, then conditionally sampling the posterior given the hyperparameters is also a Gaussian sampling problem \cite{B2012}.  Other applications that involve sampling Gaussian distributions include Brownian bridges \cite{BRSV2008}.

For the purpose of evaluating the cost of computing a proposal from \eqref{eq ar1b} for a given matrix splitting, we make similar assumptions as made in numerical linear algebra for solving $A x = b$, since solver and sampling algorithms share many of the same operations.  We assume that we can efficiently compute matrix-vector products $A v$ for any vector $v \in \bbR^d$; for example $A$ may be sparse.  Furthermore, we assume that $d$ is sufficiently large so that it is infeasible to directly compute $A^{1/2}$ or any other matrix factorization of $A$ and then directly compute independent samples from our Gaussian target.  

The remaining sections are as follows.  In Section \ref{sec equiv} we quickly show that \eqref{eq ar1} and \eqref{eq ar1b} are equivalent, then Sections \ref{sec expect} and \ref{sec jumpsize} present new analyses for the expected acceptance rate and jump size of MH algorithms with AR(1) process proposals.  Section \ref{sec examples} then applies this new analysis to proposals from Langevin diffusion and Hamiltonian dynamics.  We see that these proposals are AR(1) processes and we identify the corresponding matrix splitting and proposal target distribution.  Using our earlier analysis we assess the convergence properties of these methods as $d \rightarrow \infty$.  We provide concluding remarks in Section \ref{sec conclusion}.

\section{AR(1) processes correspond to matrix splittings}
\label{sec equiv}

We can express an AR(1) process using either \eqref{eq ar1} or \eqref{eq ar1b}, provided the AR(1) process converges.

\begin{theorem}
If we are given $G$, $g$ and $\Sigma$, and the spectral radius of $G$ is less than $1$, then the AR(1) process \eqref{eq ar1} can be written as \eqref{eq ar1b} using
\begin{equation}
\label{eq calA}
	\mathcal{A} = \left( \sum_{l=0}^\infty G^l \Sigma (G^{T})^l \right)^{-1}
\end{equation}
and
\begin{equation}
\label{eq MNbeta}
\begin{split}
	M &= \mathcal{A} (I-G)^{-1}, \\
	N &= \mathcal{A} (I-G)^{-1} G, \qquad \qquad 
	 \beta = \mathcal{A} (I-G)^{-1} g.
\end{split}
\end{equation}
Note that we also have $\mathcal{A} = M - N$ symmetric and positive definite.
\end{theorem}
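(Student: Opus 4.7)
The plan is to verify, directly from the definitions \eqref{eq calA}--\eqref{eq MNbeta}, the four identities implied by writing the AR(1) process in the matrix-splitting form \eqref{eq ar1b}: that $\mathcal{A} = M - N$ is symmetric positive definite, that $G = M^{-1}N$, that $g = M^{-1}\beta$, and that the noise covariance matches, i.e.\ $M^{-1}(M^T + N)M^{-T} = \Sigma$. The workhorse throughout is the discrete Lyapunov equation
\begin{equation*}
    \mathcal{A}^{-1} = \Sigma + G \mathcal{A}^{-1} G^T,
\end{equation*}
which is obtained immediately by splitting off the $l=0$ term in \eqref{eq calA}; the series itself converges because $\rho(G) < 1$.

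First I would settle the symmetry and positivity of $\mathcal{A}$. Each summand $G^l \Sigma (G^T)^l$ in \eqref{eq calA} is symmetric, so $\mathcal{A}^{-1}$ is symmetric; the $l=0$ term is positive definite and every other term is positive semidefinite, so $\mathcal{A}^{-1}$, and hence $\mathcal{A}$, is symmetric positive definite. The relation $M - N = \mathcal{A}(I-G)^{-1}(I - G) = \mathcal{A}$ then follows directly, giving the final sentence of the theorem as a bonus. The identities $M^{-1}N = G$ and $M^{-1}\beta = g$ are also immediate, since $\mathcal{A}(I-G)^{-1}$ is the common left factor of $M$, $N$, and $\beta$.

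The only step requiring real work is the noise covariance identity, which I rewrite as $M \Sigma M^T = M^T + N$. Substituting $\Sigma = \mathcal{A}^{-1} - G \mathcal{A}^{-1} G^T$ from the Lyapunov equation and using the algebraic decomposition
\begin{equation*}
    \mathcal{A}^{-1} - G \mathcal{A}^{-1} G^T = (I - G)\mathcal{A}^{-1} + G \mathcal{A}^{-1}(I - G^T),
\end{equation*}
the outer factors $(I-G)^{-1}$ from $M$ and $(I-G^T)^{-1}$ from $M^T$ cancel cleanly against the $(I-G)$ and $(I-G^T)$ produced by the decomposition, leaving exactly $(I - G^T)^{-1}\mathcal{A} + \mathcal{A}(I - G)^{-1} G = M^T + N$ (using $\mathcal{A}^T = \mathcal{A}$ to identify $M^T$).

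I expect the main obstacle to be spotting this decomposition of $\mathcal{A}^{-1} - G\mathcal{A}^{-1} G^T$; once it is on the page, the rest of the argument is bookkeeping. A blunter alternative, if the trick were missed, would be to expand $M \Sigma M^T$ using the defining series for $\mathcal{A}^{-1}$ and telescope term-by-term against $M^T + N$, but this is longer and obscures the structural reason the identity holds.
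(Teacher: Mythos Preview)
Your proposal is correct and follows essentially the same route as the paper. Both arguments reduce to the Lyapunov identity $\mathcal{A}^{-1} - G\mathcal{A}^{-1}G^T = \Sigma$ (you peel off the $l=0$ term; the paper telescopes the series), and both use the decomposition $(I-G)\mathcal{A}^{-1} + G\mathcal{A}^{-1}(I-G^T)$ to match the noise covariance; the only cosmetic difference is that you verify $M\Sigma M^T = M^T + N$ while the paper verifies the equivalent $M^{-1}(M^T+N)M^{-T} = \Sigma$.
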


\begin{proof}
Since the spectral radius of $G$ is less than $1$ and $\Sigma$ is symmetric positive definite, it follows that $\mathcal{A}^{-1} := \sum_{l=0}^\infty G^l \Sigma (G^{T})^l$ is well-defined, symmetric and positive definite.

It is then easy to see that \eqref{eq calA} and \eqref{eq MNbeta} satisfy $\mathcal{A} = M-N$, $G = M^{-1} N$ and $g = M^{-1} \beta$.  We must also check that $\Sigma = M^{-1} (M^T + N) M^{-T}$ is satisfied.  Substituting \eqref{eq MNbeta} into $M^{-1} (M^T+N)M^{-T}$ we get
\begin{align*}
	M^{-1}(M^T + N) M^{-T} 
	&= (I-G) \mathcal{A}^{-1} + G \mathcal{A}^{-1} G^T \\
	&= \mathcal{A}^{-1} - G \mathcal{A}^{-1} G^T \\
	&= \sum_{l=0} G^l \Sigma (G^T)^l - \sum_{l=1}^\infty G^l \Sigma (G^T)^l \\
	&= \Sigma.
\end{align*}
\end{proof}

In the following special case we obtain a symmetric matrix splitting.  

\begin{corollary}
\label{lem equiv}
If the spectral radius of $G$ is less than $1$ and $G \Sigma$ is symmetric, then the AR(1) process \eqref{eq ar1} has a corresponding matrix splitting defined by
\begin{align*}
	M &= \Sigma^{-1} (I + G), & \mathcal{A} &= M (I-G) = \Sigma^{-1} (I-G^2), \\
	N &= MG = \Sigma^{-1}(I+G)G, & \beta &= Mg = \Sigma^{-1}(I+G) g,
\end{align*}
and $M$ and $N$ are symmetric (we say the matrix splitting is symmetric).
\end{corollary}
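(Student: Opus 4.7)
The plan is to deduce the corollary directly from the theorem by exploiting the symmetry hypothesis $G\Sigma = \Sigma G^T$ to collapse the two-sided series for $\mathcal{A}^{-1}$ into a one-sided geometric series.

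First I would observe that $G\Sigma = \Sigma G^T$ is equivalent to the intertwining relation $\Sigma^{-1} G = G^T \Sigma^{-1}$, and hence, by a trivial induction, $G^l \Sigma = \Sigma (G^T)^l$ for every $l \geq 0$. Substituting this into the formula \eqref{eq calA} from the theorem gives
\begin{equation*}
    \mathcal{A}^{-1} = \sum_{l=0}^\infty G^l \Sigma (G^T)^l = \sum_{l=0}^\infty G^l \cdot G^l \Sigma = \Bigl( \sum_{l=0}^\infty G^{2l} \Bigr) \Sigma = (I - G^2)^{-1} \Sigma,
\end{equation*}
where convergence of the geometric series is guaranteed because the spectral radius of $G^2$ is less than $1$. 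Inverting yields $\mathcal{A} = \Sigma^{-1}(I - G^2)$.

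Next I would obtain the formula for $M$ from \eqref{eq MNbeta}. Since $(I-G)$ and $(I+G)$ commute, factoring $I - G^2 = (I-G)(I+G)$ gives
\begin{equation*}
    M = \mathcal{A}(I-G)^{-1} = \Sigma^{-1}(I+G)(I-G)(I-G)^{-1} = \Sigma^{-1}(I+G).
\end{equation*}
The expressions $N = MG$ and $\beta = Mg$ are then inherited verbatim from the theorem, and the identity $\mathcal{A} = M(I-G)$ is just a rewriting of what we already have.

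It remains to verify that $M$ (and $N$) are symmetric. The intertwining relation $\Sigma^{-1} G = G^T \Sigma^{-1}$ shows that $\Sigma^{-1} G$ is symmetric, and $\Sigma^{-1}$ is symmetric by assumption, so $M = \Sigma^{-1} + \Sigma^{-1} G$ is symmetric. For $N$, one checks that $\Sigma^{-1} G^2 = (\Sigma^{-1} G) G = G^T \Sigma^{-1} G$ is manifestly symmetric, whence $N = \Sigma^{-1}(G + G^2)$ is symmetric too. There is no real obstacle here; the whole argument is bookkeeping, and the one mildly delicate point is simply recognising that the hypothesis $G\Sigma$ symmetric is exactly the commutation needed to collapse the two-sided sum indexed by $l$ into a one-sided sum in $G^{2l}$.
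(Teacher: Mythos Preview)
Your argument is correct and follows essentially the same route as the paper: both collapse the two-sided series $\sum_{l\ge 0} G^l \Sigma (G^T)^l$ into $(I-G^2)^{-1}\Sigma$ using the hypothesis $G\Sigma = \Sigma G^T$, and then read off $M$, $N$, $\beta$ from the theorem. The only cosmetic difference is in the symmetry check: the paper observes $M^{-1} = (I-G)\sum_{l\ge 0} G^{2l}\Sigma$ is a sum of symmetric terms and then gets $N$ symmetric from $N = M - \mathcal{A}$, whereas you verify directly that $\Sigma^{-1}G$ and $\Sigma^{-1}G^2$ are symmetric.
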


\begin{proof}
These matrix splitting formulae follow from the identity 
$$
	\sum_{l=0}^\infty G^l \Sigma (G^T)^l = \sum_{l=0}^\infty G^{2l} \Sigma = (I-G^2)^{-1} \Sigma.
$$
To see that $M$ is symmetric (and hence also $N$ since $\mathcal{A}$ is symmetric and $\mathcal{A} = M-N$) we note that
$$
	M = \left( (I-G) \sum_{l=0}^\infty G^{2l} \Sigma \right)^{-1}.
$$
\end{proof}

\section{Expected acceptance rate for a general matrix splitting}
\label{sec expect}

The expected acceptance rate is a quantity that is related to efficiency and for optimal performance the proposal is usually tuned so that the observed average acceptance rate is well away from $0$ and $1$.  For example, it has been shown in the case when $d \rightarrow \infty$ that $0.234$ is optimal for the random-walk Metropolis algorithm (RWM) \cite{RGG1997}, $0.574$ is optimal for MALA \cite{RR1998}, and $0.651$ is optimal for HMC \cite{BPRSS2010}.  All of these results required expressions for the expected acceptance rate of the algorithm as $d \rightarrow \infty$.  Here we derive an expression for the expected acceptance rate for an AR(1) process proposal \eqref{eq ar1b} with Gaussian target $N(A^{-1}b,A^{-1})$, provided the splitting matrices are functions of $A$.  Thus, our MH algorithm is defined by
\begin{equation}
\label{MH0}
\begin{split}
	\mbox{Target:} & \qquad N(A^{-1}b, A^{-1}), \\
	\mbox{Proposal:} & \qquad y = G x + M^{-1} \beta + (\mathcal{A}^{-1} - G \mathcal{A}^{-1} G^T)^{1/2} \xi, 
\end{split}
\end{equation}
where $\xi \sim N(0,I)$, and we have used $G = M^{-1}N$ and $M^{-1}(M^T+N)M^{-T} = \mathcal{A}^{-1} - G\mathcal{A}^{-1} G^T$ \cite[Lem. 2.3]{FP2014}.  The following lemma is a result of simple algebra.  The proof is in the Appendix.

\begin{lemma}
\label{lem1}
Suppose $\mathcal{A} = M-N$ is a symmetric splitting.  Then the MH acceptance probability for \eqref{MH0} satisfies
$$
	\alpha(x,y) = 1 \wedge \exp \left( -\frac{1}{2} y^T (A - \mathcal{A}) y + \frac{1}{2} x^T(A-\mathcal{A}) x + (b-\beta)^T (y-x)\right).
$$
\end{lemma}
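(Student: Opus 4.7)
The plan is to split the acceptance ratio multiplicatively as
$$\frac{\pi(y)q(y,x)}{\pi(x)q(x,y)} = \frac{\pi(y)}{\pi(x)} \cdot \frac{q(y,x)}{q(x,y)}$$
and show that the second factor equals $\tilde\pi(x)/\tilde\pi(y)$, where $\tilde\pi$ is the density of the proposal target $N(\mathcal{A}^{-1}\beta,\mathcal{A}^{-1})$, so $\tilde\pi(x) \propto \exp(-\tfrac12 x^T \mathcal{A} x + \beta^T x)$. Once this is in hand, the lemma follows essentially automatically: $\pi(y)/\pi(x)$ contributes $-\tfrac12 y^T A y + \tfrac12 x^T A x + b^T(y-x)$ to the exponent, and $\tilde\pi(x)/\tilde\pi(y)$ contributes $\tfrac12 y^T \mathcal{A} y - \tfrac12 x^T \mathcal{A} x - \beta^T(y-x)$. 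Summing them gives exactly the exponent stated in the lemma.

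The crux is therefore the identity $\tilde\pi(x)q(x,y) = \tilde\pi(y)q(y,x)$, i.e.\ reversibility of the AR(1) proposal with respect to its stationary distribution $\tilde\pi$. I would establish this via the following standard criterion for Gaussian chains: for a Gaussian AR(1) $y = Gx + g + \nu$ with stationary covariance $C$, the stationary joint $(x,y)$ has cross-covariance $CG^T$, so the joint is invariant under swapping $x \leftrightarrow y$ (equivalently the chain is reversible) if and only if $CG^T$ is symmetric. For our proposal $C = \mathcal{A}^{-1}$ and $G = M^{-1}N$; since $\mathcal{A} = M - N$ gives $M^{-1}N = I - M^{-1}\mathcal{A}$, I obtain
$$GC = M^{-1}N \mathcal{A}^{-1} = \mathcal{A}^{-1} - M^{-1},$$
which is manifestly symmetric because the splitting is symmetric ($M = M^T$). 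Reversibility follows, and with it the required identity on $q(y,x)/q(x,y)$.

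The main obstacle is this reversibility step, since it is the one place the symmetry of the splitting is used essentially; fortunately it comes out cleanly once one exploits $\mathcal{A} = M - N$ to rewrite $GC$. A more pedestrian alternative would be to expand $q(x,y)$ and $q(y,x)$ directly from the Gaussian density with covariance $\Sigma = \mathcal{A}^{-1} - G\mathcal{A}^{-1}G^T$ and cancel the quadratic forms using the identity $\Sigma^{-1} - G^T\Sigma^{-1}G = \mathcal{A}$ (itself a consequence of the symmetric splitting). This is computationally equivalent but less illuminating; either route leaves only routine bookkeeping of quadratic forms once the key identity is in place.
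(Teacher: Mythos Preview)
Your argument is correct but takes a different route from the paper. The paper's proof is a direct computation: it writes $q(x,y) \propto \exp\bigl(-\tfrac12 (My-Nx-\beta)^T(M+N)^{-1}(My-Nx-\beta)\bigr)$ (using $M^T+N = M+N$ by symmetry), then expands the difference of the two quadratic forms from $q(y,x)$ and $q(x,y)$ using the factorisation $u^TCu - v^TCv = (u+v)^TC(u-v)$, which collapses to $(x+y)^T\mathcal{A}(x-y) - 2\beta^T(x-y)$ and hence to $y^T\mathcal{A}y - x^T\mathcal{A}x - 2\beta^T(y-x)$. Your approach instead recognises that the identity $q(y,x)/q(x,y) = \tilde\pi(x)/\tilde\pi(y)$ is exactly reversibility of the AR(1) chain with respect to its stationary law $\tilde\pi = N(\mathcal{A}^{-1}\beta,\mathcal{A}^{-1})$, and you prove that cleanly by checking that the stationary cross-covariance $G\mathcal{A}^{-1} = \mathcal{A}^{-1} - M^{-1}$ is symmetric precisely when $M$ is. This is more conceptual: it explains \emph{why} the acceptance ratio depends only on $(A-\mathcal{A},\,b-\beta)$, namely because the proposal is already in detailed balance with $\tilde\pi$. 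The paper's route is more self-contained and purely mechanical; your ``pedestrian alternative'' is essentially what the paper does.
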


Now define a spectral decomposition
$$
	A = Q \Lambda Q^T
$$ 
where $Q \in \bbR^{d\times d}$ is orthogonal and $\Lambda = \operatorname{diag}(\lambda_1^2,\dotsc,\lambda_d^2)$ is a diagonal matrix of eigenvalues of $A$.  Although we may not be able to compute $Q$ and $\Lambda$ this does not stop us from using the spectral decomposition for theory.  Simple algebra gives us the following result.

\begin{lemma}
\label{lem2}
Suppose $M = M(A)$ and $N = N(A)$ are functions of $A$.  Then $G$ and $\mathcal{A}$ are also functions of $A$ and under the coordinate transformation
$$
	x \leftrightarrow Q^T x
$$
the MH algorithm \eqref{MH0} is transformed to a MH algorithm defined by
\begin{equation}
\label{MH1}
\begin{split}
	\mbox{Target:} & \qquad N(\Lambda^{-1} Q^T b, \Lambda^{-1}), \\
	\mbox{Proposal:} & \qquad y = G x + M(\Lambda)^{-1} Q^T \beta + (\tilde{\Lambda}^{-1} - G \tilde{\Lambda}^{-1} G^T)^{1/2} \xi, 
\end{split}
\end{equation}
where $\xi \sim N(0,I)$, and $G = M(\Lambda)^{-1}N(\Lambda)$ and $\tilde{\Lambda} = \mathcal{A}(\Lambda)$ are diagonal matrices.  
\end{lemma}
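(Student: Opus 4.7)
The proof is essentially a careful change of variables, so the plan is to assemble the right ingredients and then apply them mechanically.

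First I would establish the key algebraic fact that any matrix that is a function of $A$ is simultaneously diagonalised by $Q$. Concretely, if we interpret $M(A)$ via the spectral (or polynomial/rational) calculus, then from $A = Q \Lambda Q^T$ we obtain $M = Q\, M(\Lambda)\, Q^T$ and $N = Q\, N(\Lambda)\, Q^T$, and therefore $\mathcal{A} = M - N = Q\, \tilde{\Lambda}\, Q^T$ with $\tilde{\Lambda} = M(\Lambda) - N(\Lambda)$ diagonal, and $G = M^{-1} N = Q\, M(\Lambda)^{-1} N(\Lambda)\, Q^T$, which is also orthogonally diagonalised by $Q$. In particular $\mathcal{A}^{-1} - G\,\mathcal{A}^{-1}\,G^T = Q\bigl(\tilde{\Lambda}^{-1} - G\,\tilde{\Lambda}^{-1}\,G^T\bigr) Q^T$ (with $G$ on the right meaning $M(\Lambda)^{-1}N(\Lambda)$, by slight abuse of notation), so its symmetric square root is also conjugated by $Q$.

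Next I would apply the change of variable $\tilde{x} = Q^T x$, $\tilde{y} = Q^T y$. The target $N(A^{-1}b, A^{-1})$ transforms under the orthogonal map $Q^T$ to $N(Q^T A^{-1} b,\; Q^T A^{-1} Q) = N(\Lambda^{-1} Q^T b, \Lambda^{-1})$, which is the claimed transformed target. For the proposal, I left-multiply
\[
  y = G x + M^{-1} \beta + \bigl(\mathcal{A}^{-1} - G\,\mathcal{A}^{-1} G^T\bigr)^{1/2} \xi
\]
by $Q^T$, insert $Q Q^T = I$ between the factors, and use the diagonalisations from the previous step. The deterministic part becomes $M(\Lambda)^{-1} N(\Lambda)\, \tilde{x} + M(\Lambda)^{-1} Q^T \beta$, and the noise term becomes $\bigl(\tilde{\Lambda}^{-1} - G\,\tilde{\Lambda}^{-1} G^T\bigr)^{1/2} Q^T \xi$, where $G$ now denotes $M(\Lambda)^{-1} N(\Lambda)$. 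Rotational invariance of the standard Gaussian gives $Q^T \xi \sim N(0,I)$, so the transformed proposal has the stated form.

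Finally I would note that the Metropolis-Hastings accept/reject step is covariant under invertible changes of variables: the Jacobian of $x \mapsto Q^T x$ is $1$, so the ratio $\pi(y) q(y,x)/\pi(x) q(x,y)$ is preserved and the transformed chain is a genuine MH chain for the transformed target and proposal. There is no real obstacle here; the only thing one has to be slightly careful about is the interpretation of $M(A)$ as a matrix function so that the identity $M(A) = Q\, M(\Lambda)\, Q^T$ is available, but this is built into the hypothesis that $M$ and $N$ are functions of $A$.
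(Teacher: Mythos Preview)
Your proposal is correct and is exactly the ``simple algebra'' the paper alludes to; the paper gives no proof beyond that phrase, so your explicit diagonalisation of $M$, $N$, $G$, $\mathcal{A}$ by $Q$, the pushforward of the target and proposal under $x\mapsto Q^T x$, and the observation that the MH ratio is invariant under this orthogonal change of variables, together constitute the intended argument.
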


Using Lemma \ref{lem1} we see that the acceptance probability of MH algorithms \eqref{MH0} and \eqref{MH1} are identical and hence it is sufficient to analyse the convergence properties of \eqref{MH1} to determine the convergence properties of \eqref{MH0}.  

We will need the following Lyapunov version of the Central Limit Theorem, see e.g. \cite[Thm. 27.3]{billingsley1995}

\begin{theorem}
\label{thm clt}
Let $X_1,\dotsc,X_d$ be a sequence of independent random variables with finite expected value $\mu_i$ and variance $\sigma_i^2$.  Define
$
	s_d^2 := \sum_{i=1}^d \sigma_i^2.
$
If there exists a $\delta > 0$ such that
$$
	\lim_{d \rightarrow \infty} \frac{1}{s_d^{2+\delta}} \sum_{i=1}^d \mathrm{E}[ |X_i - \mu_i|^{2+\delta} ] = 0,
$$
then 
$$
	\frac{1}{s_d} \sum_{i=1}^d (X_i - \mu_i) \xrightarrow{\mathcal{D}} N(0,1) \qquad \mbox{as $d \rightarrow \infty$}.
$$
\end{theorem}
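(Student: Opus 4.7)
The plan is to prove this by the method of characteristic functions together with Lévy's continuity theorem. Without loss of generality, assume $\mu_i = 0$ (otherwise replace $X_i$ by $X_i - \mu_i$). Writing $T_d := s_d^{-1} \sum_{i=1}^d X_i$ and $\phi_i(s) := \mathrm{E}[\exp(isX_i)]$, independence gives
$$
\phi_{T_d}(t) = \prod_{i=1}^d \phi_i(t/s_d),
$$
and it suffices to show pointwise convergence $\phi_{T_d}(t) \to e^{-t^2/2}$ for every fixed $t \in \bbR$.

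First I would reduce to the case $\delta \in (0,1]$, since if $\delta > 1$ then log-convexity of $L^p$ norms together with Cauchy--Schwarz shows that the Lyapunov condition at $\delta$ implies the Lyapunov condition at $\delta = 1$. In the reduced range the elementary estimate $|e^{iu} - 1 - iu + u^2/2| \leq C_\delta |u|^{2+\delta}$ (which follows by interpolating the two crude bounds $|u|^3/6$ and $u^2$) applied with $u = tX_i/s_d$ gives
$$
\phi_i(t/s_d) = 1 - \frac{t^2 \sigma_i^2}{2s_d^2} + r_i, \qquad |r_i| \leq C_\delta \frac{|t|^{2+\delta}}{s_d^{2+\delta}} \mathrm{E}|X_i|^{2+\delta},
$$
so summing yields $\sum_{i=1}^d |r_i| \to 0$ directly from the Lyapunov hypothesis.

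I would then compare $\prod_i \phi_i(t/s_d)$ with $\prod_i \exp(-t^2\sigma_i^2/(2s_d^2))$. The second product equals $e^{-t^2/2}$ exactly because $\sum_i \sigma_i^2 = s_d^2$. For the comparison itself I would use the standard telescoping lemma
$$
\left| \prod_{i=1}^d a_i - \prod_{i=1}^d b_i \right| \leq \sum_{i=1}^d |a_i - b_i| \qquad \text{whenever } |a_i|, |b_i| \leq 1,
$$
combined with $|1 - x - e^{-x}| \leq x^2/2$ for $x \geq 0$. This reduces the remaining work to showing $\sum_i (\sigma_i^2/s_d^2)^2 \to 0$.

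The step I expect to require the most care is this final ingredient, because the uniform smallness $\max_i \sigma_i^2/s_d^2 \to 0$ is not stated directly in the hypothesis. Here Jensen's inequality gives $\sigma_i^{2+\delta} \leq \mathrm{E}|X_i|^{2+\delta}$, so
$$
\max_{1 \leq i \leq d} \left( \frac{\sigma_i^2}{s_d^2} \right)^{(2+\delta)/2} \leq \frac{1}{s_d^{2+\delta}} \sum_{i=1}^d \mathrm{E}|X_i|^{2+\delta} \longrightarrow 0,
$$
and then $\sum_i (\sigma_i^2/s_d^2)^2 \leq \max_i(\sigma_i^2/s_d^2) \cdot \sum_i \sigma_i^2/s_d^2 = \max_i(\sigma_i^2/s_d^2) \to 0$. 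Lévy's continuity theorem then converts the pointwise convergence of $\phi_{T_d}$ to $e^{-t^2/2}$ into the claimed convergence in distribution.
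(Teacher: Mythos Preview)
The paper does not prove this theorem at all: it simply quotes it as the Lyapunov central limit theorem and cites Billingsley, \emph{Probability and Measure}, Thm.~27.3. There is therefore no ``paper's own proof'' to compare against.

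Your argument is the standard characteristic-function proof and is essentially correct. Two minor remarks. First, the reduction from $\delta>1$ to $\delta\le 1$ is really H\"older's (or Lyapunov's moment) inequality rather than Cauchy--Schwarz: interpolating $\mathrm{E}|X_i|^{3}$ between $\mathrm{E}|X_i|^{2}=\sigma_i^2$ and $\mathrm{E}|X_i|^{2+\delta}$ and then applying H\"older to the sum gives $s_d^{-3}\sum_i \mathrm{E}|X_i|^3 \le \bigl(s_d^{-(2+\delta)}\sum_i \mathrm{E}|X_i|^{2+\delta}\bigr)^{1/\delta}\to 0$. Second, your Step~6 tacitly uses that the $X_i$ are already centred when invoking $\sigma_i^{2+\delta}\le \mathrm{E}|X_i|^{2+\delta}$; since you reduced to $\mu_i=0$ at the outset this is fine, but worth making explicit. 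With those cosmetic fixes the sketch goes through.
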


An equivalent conclusion to this theorem is $\sum_{i=1}^d X_i \rightarrow N(\sum_{i=1}^d \mu_i, s_d^2)$ in distribution as $d \rightarrow \infty$.  Another useful fact is  
\begin{equation}
\label{eq uf1}
	X \sim N(\mu,\sigma^2) \qquad \Rightarrow \qquad \mathrm{E}[ 1 \wedge \mathrm{e}^X ] = \Phi(\tsfrac{\mu}{\sigma}) + \mathrm{e}^{\mu + \sigma^2/2} \Phi(-\sigma - \tsfrac{\mu}{\sigma})
\end{equation}
where $\Phi$ is the standard normal cumulative distribution function.  See e.g. \cite[Prop. 2.4]{RGG1997} or \cite[Lem. B.2]{BRS2009}.

\begin{theorem}
\label{thm accept}
Suppose that $M$ and $N$ in \eqref{MH0} are functions of $A$, and the Markov chain is in equilibrium, i.e. $x \sim N(A^{-1}b,A^{-1})$.  Recall that $\lambda_i^2$ are eigenvalues of $A$, and define $\tilde{\lambda}^2_i$ and $G_i$ to be the eigenvalues of $\mathcal{A}$ and $G$ respectively.  Also define
\begin{align*}
	\tilde{g}_i &:= 1-G_i, & g_i &:= 1-G_i^2, & m_i &:= (A^{-1}b)_i, & \tilde{m}_i &:= (\mathcal{A}^{-1}\beta)_i, \\
	r_i &:= \frac{\lambda_i^2-\tilde{\lambda}_i^2}{\lambda_i^2}, &
	\tilde{r}_i &:= \frac{\lambda_i^2-\tilde{\lambda}_i^2}{\tilde{\lambda}_i^2}, & 
	\hat{r}_i &:= m_i - \tilde{m}_i
\end{align*}
and 
\begin{align*}
	T_{0i} &:= \hat{r}_i^2 \lambda_i^2 ( \tsfrac{1}{2} r_i g_i - \tilde{g}_i), \\
	T_{1i} &:= \hat{r}_i \lambda_i ( r_i g_i - \tilde{g}_i), \\
	T_{2i} &:= \hat{r}_i \lambda_i g_i^{1/2} (1+\tilde{r}_i)^{1/2} ( 1 - r_i G_i), \\
	T_{3i} &:= \tsfrac{1}{2} r_i g_i, \\
	T_{4i} &:= -\tsfrac{1}{2} r_i g_i (1 + \tilde{r}_i), \\
	T_{5i} &:= -r_i G_i g_i^{1/2} (1+\tilde{r}_i)^{1/2}.
\end{align*}
If there exists a $\delta > 0$ such that
\begin{equation}
\label{eq Tcond}
	\lim_{d \rightarrow \infty} \frac{\sum_{i=1}^d |T_{ji}|^{2+\delta}}{ \left( \sum_{i=1}^d |T_{ji}|^2 \right)^{1+\delta/2}} = 0 \qquad \mbox{for $j=1,2,3,4,5$}
\end{equation}
($j=0$ is not required), then 
$$
	Z:= \log \left( \frac{\pi(y)q(y,x)}{\pi(x)q(x,y)} \right) \xrightarrow{\mathcal{D}} N(\mu,\sigma^2) \qquad \mbox{as $d \rightarrow \infty$}
$$
where $\mu = \lim_{d\rightarrow \infty} \sum_{i=1}^d \mu_i$ and $\sigma^2 = \lim_{d\rightarrow \infty} \sum_{i=1}^d \sigma_i^2$ and
$$
	\mu_i = T_{0i} + T_{3i} + T_{4i} \qquad \mbox{and} \qquad \sigma_i^2 = T_{1i}^2 + T_{2i}^2 + 2T_{3i}^2 + 2T_{4i}^2 + T_{5i}^2.
$$
Hence, the expected acceptance rate satisfies
$$	
	\mathrm{E}[\alpha(x,y)] \rightarrow \Phi(\tsfrac{\mu}{\sigma}) + \mathrm{e}^{\mu + \sigma^2/2} \Phi(-\sigma - \tsfrac{\mu}{\sigma})
	\qquad \mbox{as $d \rightarrow \infty$}
$$
where $\Phi$ is the standard normal cumulative distribution function.
\end{theorem}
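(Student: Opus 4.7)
The plan is to combine the two preceding lemmas with a Lyapunov-CLT argument on the $d$ independent scalar contributions that appear after diagonalisation. First I would invoke Lemma~\ref{lem2} to move to the coordinate system in which $A=\Lambda$, $\mathcal{A}=\tilde{\Lambda}$ and $G$ are all diagonal, so that the components of $x$ and $y$ decouple. In equilibrium the $i$-th component of the current state can be written as $x_i = m_i + \lambda_i^{-1}\eta_i$ with $\eta_i\sim N(0,1)$ independent across $i$, and the proposal as $y_i = G_i x_i + M_i^{-1}\beta_i + \tilde{\lambda}_i^{-1} g_i^{1/2}\xi_i$ with $\xi_i\sim N(0,1)$ independent of everything else; the identity $M^{-1}(M^T+N)M^{-T}=\mathcal{A}^{-1}-G\mathcal{A}^{-1}G^T$ together with the diagonal structure gives the scalar innovation variance $\tilde{\lambda}_i^{-2}g_i$, and $M_i^{-1}\beta_i=(1-G_i)\tilde m_i=\tilde g_i\tilde m_i$ eliminates $\beta_i$ in favour of $\tilde m_i$.

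Next I would apply Lemma~\ref{lem1} in these diagonalised coordinates to obtain
\[
Z = \sum_{i=1}^d Z_i,\qquad Z_i = -\tfrac12(\lambda_i^2-\tilde\lambda_i^2)(y_i^2-x_i^2) + (\lambda_i^2 m_i-\tilde\lambda_i^2\tilde m_i)(y_i-x_i),
\]
with the $Z_i$ independent across $i$. Substituting the expressions for $x_i,y_i$ above and expanding, each $Z_i$ becomes a quadratic polynomial in the two standard normals $\eta_i,\xi_i$, so it is a linear combination of the monomials $1,\eta_i,\xi_i,\eta_i^2-1,\xi_i^2-1,\eta_i\xi_i$. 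The bulk of the work is algebraic: grouping terms and using the definitions of $r_i,\tilde r_i,g_i,\tilde g_i,\hat r_i$ to identify the coefficient of each monomial exactly as $T_{0i},T_{1i},T_{2i},T_{3i},T_{4i},T_{5i}$ respectively. Once this identification is done, a direct computation using $\mathrm{E}[\eta_i^2-1]=0$, $\mathrm{Var}(\eta_i^2-1)=2$, and independence of $\eta_i,\xi_i$ yields $\mathrm{E}[Z_i]=T_{0i}+T_{3i}+T_{4i}$ and $\mathrm{Var}(Z_i)=T_{1i}^2+T_{2i}^2+2T_{3i}^2+2T_{4i}^2+T_{5i}^2$, matching the claimed $\mu_i$ and $\sigma_i^2$.

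To pass from the scalar decomposition to asymptotic normality I would apply Theorem~\ref{thm clt} to the independent sequence $\{Z_i-\mu_i\}_{i=1}^d$. The required Lyapunov moment condition follows from \eqref{eq Tcond}: since $Z_i-\mu_i$ is a sum of five terms with coefficients $T_{1i},\ldots,T_{5i}$ times random variables of bounded $(2+\delta)$-moment, Minkowski's inequality gives $\mathrm{E}|Z_i-\mu_i|^{2+\delta}\le C\sum_{j=1}^{5}|T_{ji}|^{2+\delta}$, and each of the five ratios in \eqref{eq Tcond} controls the corresponding piece of $s_d^{-(2+\delta)}\sum_i\mathrm{E}|Z_i-\mu_i|^{2+\delta}$ after noting that $s_d^2\ge\sum_i T_{ji}^2$ for each~$j$. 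Thus $Z\xrightarrow{\mathcal{D}}N(\mu,\sigma^2)$.

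Finally, since $\alpha(x,y)=1\wedge e^Z\in[0,1]$, the continuous mapping theorem and bounded convergence give $\mathrm{E}[\alpha(x,y)]\to \mathrm{E}[1\wedge e^{Z^\ast}]$ with $Z^\ast\sim N(\mu,\sigma^2)$, and identity \eqref{eq uf1} evaluates this limit to $\Phi(\mu/\sigma)+e^{\mu+\sigma^2/2}\Phi(-\sigma-\mu/\sigma)$. The main obstacle is the bookkeeping in the second step: one must verify that after expanding $y_i^2-x_i^2$ and $y_i-x_i$ and collecting monomials in $\eta_i,\xi_i$, the coefficients really do simplify to the specific algebraic forms given by $T_{0i},\ldots,T_{5i}$; everything else is a routine application of the Lyapunov CLT and a bounded convergence argument.
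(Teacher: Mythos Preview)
Your approach is essentially identical to the paper's: diagonalise via Lemma~\ref{lem2}, expand each $Z_i$ as a quadratic in two independent standard normals using Lemma~\ref{lem1}, read off mean and variance, verify the Lyapunov condition term-by-term from \eqref{eq Tcond} using $s_d^2\ge\sum_i T_{ji}^2$, and finish with \eqref{eq uf1}. One small bookkeeping slip: if you expand in the \emph{centred} basis $\{1,\eta_i,\xi_i,\eta_i^2-1,\xi_i^2-1,\eta_i\xi_i\}$ then the coefficient of the constant monomial is $\mu_i=T_{0i}+T_{3i}+T_{4i}$, not $T_{0i}$; the paper instead writes $Z_i=T_{0i}+T_{1i}\xi_i+T_{2i}\nu_i+T_{3i}\xi_i^2+T_{4i}\nu_i^2+T_{5i}\xi_i\nu_i$ with uncentred squares, from which $\mathrm{E}[Z_i]=T_{0i}+T_{3i}+T_{4i}$ then follows, so your stated mean is correct but your description of the coefficients is off by this shift.
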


\begin{proof}
By Lemma \ref{lem2} it is sufficient to only consider \eqref{MH0} in the case where all matrices are diagonal matrices, e.g. $A = \operatorname{diag}(\lambda_1^2,\dotsc,\lambda_d^2)$, $\mathcal{A} = \operatorname{diag}(\tilde{\lambda}_1^2,\dotsc,\tilde{\lambda}_d^2)$, $M = \operatorname{diag}(M_1,\dotsc,M_d)$, $G = \operatorname{diag}(G_1,\dotsc,G_d)$, $m_i = \! \lambda_i^{-2} b_i$, and $\tilde{m}_i = \tilde{\lambda}_i^{-2} \beta_i$.  Then, in equilibrium we have 
$$
	x_i = m_i + \frac{1}{\lambda_i} \xi_i
$$
where $\xi_i \sim N(0,1)$ and using $\tilde{m}_i = G_i \tilde{m}_i + M_i^{-1}\beta_i$ we have
\begin{align*}
	y_i &= G_i x_i + \frac{\beta_i}{M_i} + \frac{(1-G_i^2)^{1/2}}{\tilde{\lambda}_i} \nu_i \\
	&= G_i \left( m_i + \frac{1}{\lambda_i} \xi_i \right) + (1-G_i) \tilde{m}_i  + \frac{g_i^{1/2}}{\tilde{\lambda}_i} \nu_i \\
	&= \tilde{m}_i + G_i \hat{r} + \frac{G_i}{\lambda_i} \xi_i + \frac{g_i^{1/2}}{\tilde{\lambda}_i} \nu_i 
\end{align*}
where $\nu_i \sim N(0,1)$.  From Lemma \ref{lem1} we also have $Z = \sum_{i=1}^d Z_i$ where
$$
	Z_i = -\tsfrac{1}{2} (\lambda_i^2 - \tilde{\lambda}_i^2)(y_i^2 - x_i^2) + (b_i-\beta_i)(y_i - x_i).
$$
Substituting $x_i$ and $y_i$ as above, using identities such as $\lambda_i^2 \tilde{\lambda}_i^{-2} = 1 + \tilde{r}_i$ and $(b_i - \beta_i)\lambda_i^{-2} = \hat{r}_i + r_i \tilde{m}_i$, then after some algebra we eventually find
\begin{align*}
	Z_i &= T_{0i} + T_{1i}\xi_i + T_{2i}\nu_i + T_{3i} \xi_i^2 + T_{4i} \nu_i^2 + T_{5i} \xi_i \nu_i.
\end{align*}
Hence
$$
	\mu_i:= \mathrm{E}[Z_i] = T_{0i} + T_{3i} + T_{4i}
$$
and
\begin{align*}
	\sigma_i^2 &:= \mathrm{Var}[Z_i] = \mathrm{E}[Z_i^2] - \mathrm{E}[Z_i]^2 \\
	&= \left( T_{0i}^2 + T_{1i}^2 + T_{2i}^2 + 3 T_{3i}^2 + 3 T_{4i}^2 + T_{5i}^2 
	+ 2 T_{0i} T_{3i} + 2 T_{0i} T_{4i} + 2 T_{3i} T_{4i} \right) \\
	& \qquad - \left( T_{0i} + T_{3i} + T_{4i} \right)^2 \\
	&= T_{1i}^2 + T_{2i}^2 + 2 T_{3i}^2 + 2 T_{4i}^2 + T_{5i}^2	 
\end{align*}
and
\begin{align*}
	Z_i - \mu_i &= T_{1i} \xi_i + T_{2i} \nu_i + T_{3i} (\xi_i^2 - 1) + T_{4i} (\nu_i^2 - 1) + T_{5i} \xi_i \nu_i.
\end{align*}
Therefore, for any $d \in \mathbb{N}$ and $\delta > 0$ we can bound the Lyapunov condition in Theorem \ref{thm clt} as follows 
\begin{align*}
	\frac{1}{s_d^{2+\delta}} \sum_{i=1}^d \mathrm{E}[|Q_i-\mu_i|^{2+\delta}] 
	&\leq \frac{5^{2+\delta}}{s_d^{2+\delta}} \sum_{j=1}^5 C_j(\delta) \sum_{i=1}^d |T_{ji}|^{2+\delta} \\
	&\leq 5^{2+\delta} \sum_{j=1}^5 C_j(\delta) \frac{\sum_{i=1}^d |T_{ji}|^{2+\delta}}{\left( \sum_{i=1}^d T_{ji}^2 \right)^{1+\delta/2} }
\end{align*}
where $C_1(\delta) = C_2(\delta) = \mathrm{E}[|\xi|^{2+\delta}]$ , $C_3(\delta) = C_4(\delta) = \mathrm{E}[|\xi^2-1|^{2+\delta}]$
and $C_5(\delta) = \mathrm{E}[|\xi|^{2+\delta}]^2$, and $\xi \sim N(0,1)$.

Therefore, if \eqref{eq Tcond} holds then the result follows from Theorem \ref{thm clt} and \eqref{eq uf1}.
\end{proof}

\section{Expected squared jump size for a general matrix splitting}
\label{sec jumpsize}

The efficiency of a MCMC method is usually given in terms of the integrated autocorrelation time which may be thought of as ``the number of dependent sample points from the Markov chain needed to give the variance reducing power of one independent point'', see e.g. \cite[\S 6.3]{N1993}.  Unfortunately, we are unable to directly estimate this quantity for our matrix splitting methods, and it depends on the statistic of concern.  As a proxy we instead consider the expected squared jump size of the Markov chain in a direction $q \in \bbR^d$,
$$
	\mathrm{E}[(q^T(x' - x))^2]
$$
where $x, x' \sim N(A^{-1}b,A^{-1})$ are successive elements of the Markov chain in equilibrium.  We will only consider the cases where $q$ is an eigenvector of the precision or covariance matrix.  It is related to the integrated autocorrelation time for the linear functional $q^T(\cdot)$ by
$$
	\mathrm{Corr}[q^T x,q^T x'] = 1 - \frac{\mathrm{E}[(q^T(x' - x))^2]}{2 \mathrm{Var}[q^T x]}
$$
so that large squared jump size implies small first-order autocorrelation.

This is similar to the approach used for analysing the efficiency of RWM, MALA and HMC, where the expected squared jump size of an arbitrary component of the Markov chain is considered (see e.g. \cite{BRS2009} and \cite{BPRSS2010}) .  

We will need the following technical lemma whose proof is in the Appendix.

\begin{lemma}
\label{lem 5}
Suppose that $\{ t_i \}_{i=1}^\infty$ is a sequence of real-valued numbers and $r > 0$.  Then, for any $k \in \mathbb{N}$,
\begin{equation}
\label{eq l1a}
	\lim_{d\rightarrow \infty} \frac{\sum_{i=1}^d |t_i|^r}{\left( \sum_{i=1}^d t_i^2 \right)^{r/2}} = 0 
	\quad \Rightarrow \quad
	\lim_{d\rightarrow \infty} \frac{\sum_{i=1, i \neq k}^d |t_i|^r}{\left( \sum_{i=1, i \neq k}^d t_i^2 \right)^{r/2}} = 0.
\end{equation}
\end{lemma}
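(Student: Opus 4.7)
The plan is to reduce to the only substantive case, namely $S_d := \sum_{i=1}^d t_i^2 \to \infty$, and then bound the primed ratio by the unprimed one times a correction factor that tends to $1$. For $d < k$ there is nothing to show (the two sides of \eqref{eq l1a} coincide), so I will restrict attention to $d \geq k$.

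First I would dispose of the trivial case $t_i \equiv 0$, where both ratios vanish identically, and assume from here on that some $t_j \neq 0$. Set $R_d := \sum_{i=1}^d |t_i|^r$, a nondecreasing sequence.

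Next I would argue that the hypothesis forces $S_d \to \infty$. Suppose for contradiction that $S_d$ is bounded; then it converges to some finite $S_\infty > 0$. By monotonicity, $R_d$ either converges to a finite $R_\infty > 0$, in which case $R_d / S_d^{r/2} \to R_\infty / S_\infty^{r/2} > 0$, contradicting the hypothesis, or else $R_d \to \infty$, which contradicts the hypothesis even more strongly since $S_d^{r/2}$ remains bounded. Hence $S_d \to \infty$.

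Given $S_d \to \infty$ and $t_k$ fixed, one has $S_d - t_k^2 \to \infty$ and $S_d / (S_d - t_k^2) \to 1$. Since removing a term only decreases the numerator,
$$
\frac{\sum_{i=1,\,i\neq k}^d |t_i|^r}{\bigl(\sum_{i=1,\,i\neq k}^d t_i^2\bigr)^{r/2}}
= \frac{R_d - |t_k|^r}{(S_d - t_k^2)^{r/2}}
\leq \frac{R_d}{S_d^{r/2}} \cdot \left(\frac{S_d}{S_d - t_k^2}\right)^{r/2},
$$
and the right-hand side tends to $0 \cdot 1 = 0$, giving the conclusion. There is no real obstacle; the only subtlety is the case analysis needed to rule out the pathological situation in which $S_d$ stays bounded, which is handled by exploiting the monotonicity of $R_d$.
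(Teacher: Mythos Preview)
Your proof is correct. The overall strategy matches the paper's: drop $|t_k|^r$ from the numerator and bound the truncated denominator $\sum_{i\neq k} t_i^2$ from below by a constant multiple of $S_d = \sum_i t_i^2$, so the primed ratio is controlled by the unprimed one.

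The one difference is in how that lower bound on the denominator is obtained. You take a detour to prove the auxiliary fact $S_d \to \infty$ (via a contradiction argument with a case split on $R_d$), and then use $S_d/(S_d - t_k^2) \to 1$. The paper instead extracts the bound $t_k^2 < \tfrac{1}{2} S_d$ for large $d$ directly from the hypothesis: taking $\epsilon = 2^{-r/2}$ in the definition of the limit gives $|t_k|^r \le R_d < 2^{-r/2} S_d^{r/2}$, hence $t_k^2 < \tfrac{1}{2} S_d$, so the primed ratio is at most $2^{r/2}$ times the unprimed one. This avoids the case analysis entirely and is a line or two shorter; on the other hand, your route establishes the slightly stronger side fact that $S_d \to \infty$ whenever the hypothesis holds nontrivially. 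Either way the argument goes through without issue.
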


The following main theorem for this section is a generalization of \cite[Prop. 3.8]{BPRSS2010}.

\begin{theorem}
\label{thm jumpsize}
Suppose that $M$ and $N$ in \eqref{MH0} are functions of $A$, and the Markov chain is in equilibrium, i.e. $x \sim N(A^{-1}b,A^{-1})$.
With the same definitions as in Theorem \ref{thm accept}, let $q_i$ be a normalized eigenvector of $A$ corresponding to $\lambda_i^2$ ($i^{th}$ column of $Q$).  If there exists a $\delta > 0$ such that \eqref{eq Tcond} is satisfied, then 
\begin{equation}
\label{eq jump1}
	\mathrm{E}[(q_i^T (x'-x))^2] \rightarrow U_1 U_2 + U_3 \qquad \mbox{as $d \rightarrow \infty$}
\end{equation}
where
\begin{align*}
	U_1 &= \tilde{g}_i^2 \hat{r}_i^2 + \frac{\tilde{g}_i^2}{\lambda_i^2} + \frac{g}{\tilde{\lambda}_i^2}, \\
	U_2 &= \Phi(\tsfrac{\mu^-}{\sigma^-}) + \mathrm{e}^{\mu^- + (\sigma^-)^2/2} \Phi(-\sigma^- - \tsfrac{\mu^-}{\sigma^-}), \\
	|U_3| &\leq ( \sigma_i^2 + \mu_i^2 )^{1/2}  
	\times \left( \tilde{g}_i^4 \hat{r}_i^4 + 3 \frac{\tilde{g}_i^4}{\lambda_i^4} + 3 \frac{g_i^2}{\tilde{\lambda}_i^4} + 6 \frac{\tilde{g}_i^4 \hat{r}_i^2}{\lambda_i^2} + 6 \frac{\tilde{g}_i^2 g_i \hat{r}^2}{\tilde{\lambda}_i^2} + 6 \frac{\tilde{g}_i^2 g}{\lambda_i^2 \tilde{\lambda}_i^2} \right)^{1/2}
\end{align*}
and where $\mu^- := \sum_{j=1,j\neq i}^d \mu_j$ and $(\sigma^-)^2 := \sum_{j=1,j\neq i}^d \sigma_j^2$.
\end{theorem}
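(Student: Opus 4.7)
The plan is to reduce to the diagonal-coordinate system of Lemma \ref{lem2}, decompose the log-acceptance ratio into a coordinate-local piece and an independent remainder that obeys a CLT, and then control the residual with a simple Lipschitz bound. After $x \mapsto Q^T x$, we have $q_i^T(x'-x) = x'_i - x_i$; since a rejection leaves $x_i$ unchanged, $\mathrm{E}[(x'_i - x_i)^2] = \mathrm{E}[(y_i - x_i)^2 (1 \wedge \mathrm{e}^Z)]$. From the proof of Theorem \ref{thm accept}, in diagonal coordinates $y_i - x_i = -\tilde{g}_i \hat{r}_i - \tilde{g}_i \xi_i/\lambda_i + g_i^{1/2}\nu_i/\tilde{\lambda}_i$ is a Gaussian function of $(\xi_i,\nu_i)$ alone, and $Z = \sum_j Z_j$ with each $Z_j$ a function of $(\xi_j,\nu_j)$ alone. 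Because the $\xi_j$ and $\nu_j$ are independent across $j$, the triple $(x_i,y_i,Z_i)$ is genuinely (not just asymptotically) independent of $Z^- := \sum_{j\ne i} Z_j$.

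Next I would write $1 \wedge \mathrm{e}^Z = (1 \wedge \mathrm{e}^{Z^-}) + \Delta$ with $\Delta := (1 \wedge \mathrm{e}^Z) - (1 \wedge \mathrm{e}^{Z^-})$, splitting the expectation as the product term $\mathrm{E}[(y_i-x_i)^2] \cdot \mathrm{E}[1 \wedge \mathrm{e}^{Z^-}]$ (by the independence above) plus a residual $U_3 := \mathrm{E}[(y_i-x_i)^2 \Delta]$. A direct second-moment computation on the Gaussian $y_i - x_i$ yields $U_1$ for the first factor. For the second factor I would invoke Lemma \ref{lem 5} to transfer the Lyapunov condition \eqref{eq Tcond} to the sum with the $i$-th term removed, so Theorem \ref{thm clt} gives $Z^- \xrightarrow{\mathcal{D}} N(\mu^-,(\sigma^-)^2)$ as $d \to \infty$; bounded convergence together with the identity \eqref{eq uf1} then delivers $\mathrm{E}[1 \wedge \mathrm{e}^{Z^-}] \to U_2$, producing the advertised $U_1 U_2$ contribution.

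For the residual I would use the elementary observation that $t \mapsto 1 \wedge \mathrm{e}^t$ is $1$-Lipschitz (its derivative has absolute value at most $1$), whence $|\Delta| \le |Z_i|$ pointwise. Independence of $Z^-$ from $(x_i,y_i,Z_i)$ together with Cauchy-Schwarz then gives
\[
|U_3| \le \mathrm{E}[(y_i-x_i)^2 |Z_i|] \le \bigl(\mathrm{E}[(y_i-x_i)^4]\bigr)^{1/2} \bigl(\mathrm{E}[Z_i^2]\bigr)^{1/2}.
\]
The second factor equals $(\mu_i^2 + \sigma_i^2)^{1/2}$, read off the formulae for $\mu_i$ and $\sigma_i^2$ in Theorem \ref{thm accept}. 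The first is obtained by applying the Gaussian fourth-moment identity $\mathrm{E}[(a + sW)^4] = a^4 + 6 a^2 s^2 + 3 s^4$ (with $W \sim N(0,1)$) to $y_i - x_i \sim N(-\tilde{g}_i\hat{r}_i,\ \tilde{g}_i^2/\lambda_i^2 + g_i/\tilde{\lambda}_i^2)$ and expanding; this reproduces exactly the polynomial appearing in the stated bound.

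The main conceptual hurdle is recognising the exact independence of $(x_i,y_i,Z_i)$ from the remainder sum $Z^-$ that the diagonalisation affords: without it, one would need a much more delicate asymptotic-independence argument. Once this independence is in hand, the decomposition is a clean product plus a Lipschitz-controlled error, and everything else is either the CLT already assembled in the proof of Theorem \ref{thm accept} or mechanical Gaussian moment algebra.
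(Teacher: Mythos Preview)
Your proposal is correct and follows essentially the same route as the paper: diagonalise, split $\alpha(x,y)$ into the $i$-free piece $1\wedge\mathrm{e}^{Z^-}$ plus a remainder, use independence to factor the leading term as $U_1\cdot\mathrm{E}[1\wedge\mathrm{e}^{Z^-}]$, invoke Lemma~\ref{lem 5} and the CLT for $U_2$, and bound the residual by Lipschitz plus Cauchy--Schwarz to get $(\mu_i^2+\sigma_i^2)^{1/2}\,\mathrm{E}[(y_i-x_i)^4]^{1/2}$. The only cosmetic differences are that the paper keeps the accept/reject indicator $I^d$ explicit rather than integrating out the uniform immediately, and applies Cauchy--Schwarz before the Lipschitz bound rather than after; neither changes the argument.
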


\begin{proof}
Under the coordinate transformation $x \leftrightarrow Q^T x$, \eqref{MH0} becomes \eqref{MH1} and $\mathrm{E}[(q_i^T(x'-x))^2]$ becomes
$\mathrm{E}[(x_i' - x_i)^2]$.  Therefore it is sufficient to only consider the squared jump size of an arbitrary coordinate of the Markov chain for the case when all matrices are diagonal matrices.  As in the proof of Theorem \ref{thm accept}, let $A = \operatorname{diag}(\lambda_1^2,\dotsc,\lambda_d^2)$, $\mathcal{A} = \operatorname{diag}(\tilde{\lambda}_1^2,\dotsc,\tilde{\lambda}_d^2)$, $M = \operatorname{diag}(M_1,\dotsc,M_d)$, $G = \operatorname{diag}(G_1,\dotsc,G_d)$, $m_i = \lambda_i^{-2} b_i$, and $\tilde{m}_i = \tilde{\lambda}_i^{-2} \beta_i$.  Since the chain is in equilibrium we have $x_i = m_i + \lambda_i^{-1} \xi_i$ for $\xi_i \sim N(0,1)$ and $y_i = \tilde{m}_i + G_i \hat{r} + G_i \lambda_i^{-1} \xi_i + g_i^{1/2} \tilde{\lambda}_i^{-1} \nu_i$ where $\nu_i \sim N(0,1)$.  Now let $I^d$ be the indicator function such that
$$	
	I^d := \begin{cases}
		1 & \mbox{if $u < \alpha(x,y)$} \\
		0 & \mbox{otherwise}
	\end{cases}
$$
where $u \sim U[0,1]$.  Then $x' = I^d (y-x) + (1-I^d) x$ and
$
	(x'-x)^2 = I^d(y-x)^2.
$
For given coordinate $i$, define another indicator function $I^{d-}$ such that
$$
	I^{d-} := \begin{cases}
		1 & \mbox{if $u < \alpha^-(x,y)$} \\
		0 & \mbox{otherwise}
	\end{cases}
$$
where $\alpha^-(x,y) := 1 \wedge \exp( \sum_{j=1,j\neq i}^d Z_j )$, and define
$
	e_j := I^{d-}(y-x)^2.
$
The proof strategy is now to approximate $\mathrm{E}[(x_i' - x_i)^2]$ with $\mathrm{E}[e_i]$;
$$
	\mathrm{E}[(x_i'-x_i)^2] = \mathrm{E}[e_i] + \mathrm{E}[(x_i'-x_i)^2 - e_i].
$$
First, consider $\mathrm{E}[e_i]$;
\begin{align*}
	\mathrm{E}[e_i]  
	&= \mathrm{E}[I^{d-}(y_i-x_i)^2] \\
	&= \mathrm{E}[(y_i-x_i)^2] \mathrm{E}[\alpha^-(x,y)] \\
	&= \mathrm{E}\left[ \left(-\hat{r}_i - \frac{\tilde{g}_i}{\lambda_i}\xi_i + \frac{g^{1/2}}{\tilde{\lambda}_i} \nu_i \right)^2 \right] \mathrm{E}[\alpha^-(x,y)]\\
	&= U_1 \mathrm{E}[\alpha^-(x,y)].
\end{align*}
Also, by Theorem \ref{thm accept} (using Lemma \ref{lem 5} to ensure the appropriate condition for Theorem \ref{thm accept} is met) we obtain $\mathrm{E}[\alpha^-(x,y)] \rightarrow U_2$ as $d \rightarrow \infty$.

The error is bounded using the Cauchy-Schwarz inequality;
\begin{align*}
	|\mathrm{E}[(x_i'-x_i)^2 - e_i]|
	&= |\mathrm{E}[(I^d-I^{d-})(y_i-x_i)^2]| \\
	&\leq \mathrm{E}[(\alpha(x,y)-\alpha^-(x,y))^2]^{1/2} \mathrm{E}[(y_i-x_i)^4]^{1/2}.
\end{align*}	
Since $1\wedge \mathrm{e}^X$ is Lipschitz with constant $1$, and using results from the proof of Theorem \ref{thm accept}, we obtain
$$
	\mathrm{E}[(\alpha(x,y)-\alpha^-(x,y))^2]^{1/2} 
	\leq \mathrm{E}[Z_i^2]^{1/2} 
	= ( \sigma_i^2 + \mu_i^2 )^{1/2},
$$
and simple algebra yields
\begin{align*}
	\mathrm{E}[(y_i-x_i)^4]^{1/2}
	&= \mathrm{E}\left[ \left( -\hat{r}_i - \frac{\tilde{g}_i}{\lambda_i}\xi_i + \frac{g^{1/2}}{\tilde{\lambda}_i} \nu_i \right)^4 \right]^{1/2} \\
	&= \left( \tilde{g}_i^4 \hat{r}_i^4 + 3 \frac{\tilde{g}_i^4}{\lambda_i^4} + 3 \frac{g_i^2}{\tilde{\lambda}_i^4} + 6 \frac{\tilde{g}_i^4 \hat{r}_i^2}{\lambda_i^2} + 6 \frac{\tilde{g}_i^2 g_i \hat{r}^2}{\tilde{\lambda}_i^2} + 6 \frac{\tilde{g}_i^2 g}{\lambda_i^2 \tilde{\lambda}_i^2} \right)^{1/2}.
\end{align*}
\end{proof}

The terms in the theorem above are quite lengthy, but in many situations they simplify.  For example, we may have the situation where $\mu^- \rightarrow \mu$ and $\sigma^- \rightarrow \sigma$ as $d \rightarrow \infty$, so $U_2$ becomes the expected acceptance rate for the algorithm.  Also, it may be possible to derive a bound such as
$$
	|U_3| \leq C (r_i + \hat{r}_i)
$$	
so that $U_3$ is small if both the relative error of the $i^{\mathrm{th}}$ eigenvalue and error of the means are small.

\section{Examples}
\label{sec examples}

\subsection{Discretized Langevin diffusion - MALA}
\label{sec langevin}

The proposal for MALA is obtained from the Euler-Maruyama discretization of a Langevin diffusion process $\{ z_t \}$ which satisfies the stochastic differential equation
\[
	\frac{\dd z_t}{\dd t} = \frac{1}{2} \nabla \log \pi(z_t) + \frac{\dd W_t}{\dd t},
\]
where $W_t$ is standard Brownian motion in $\bbR^d$.  Since the diffusion process has the desired target distribution as equilibrium, one might expect a discretization of the diffusion process to almost preserve the desired target distribution.  Given target $N(A^{-1}b,A^{-1})$, current state $x \in \bbR^d$ and time step $h >0$, the MALA proposal $y \in \bbR^d$ is defined as
\begin{equation}
\label{MALA prop}
	y = (I - \tsfrac{h}{2} A) x + \tsfrac{h}{2} b + \sqrt{h} \xi 
\end{equation}
where $\xi \sim N(0,I)$.  Identifying this with the AR(1) process \eqref{eq ar1} and applying Corollary \ref{lem equiv} we have proved the following theorem.  

\begin{theorem}
\label{thm MALA split}
MALA corresponds to the matrix splitting
\begin{align*}
	M &= \tsfrac{2}{h}(I - \tsfrac{h}{4}A), & 
	\mathcal{A} &= (I - \tsfrac{h}{4}A) A, \\
	N &= \tsfrac{2}{h}(I - \tsfrac{h}{4}A)(I-\tsfrac{h}{2}A), &
	\beta &= (I - \tsfrac{h}{4}A) b.
\end{align*}
\end{theorem}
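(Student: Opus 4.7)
The plan is to recognize the MALA proposal \eqref{MALA prop} as an instance of the AR(1) form \eqref{eq ar1} and then invoke Corollary \ref{lem equiv} to read off the matrix splitting. Comparing \eqref{MALA prop} with \eqref{eq ar1} we identify
\[
    G = I - \tsfrac{h}{2} A, \qquad g = \tsfrac{h}{2} b, \qquad \Sigma = h I,
\]
with $\nu = \sqrt{h}\,\xi \sim N(0, hI)$.

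Before applying Corollary \ref{lem equiv} I need to check its two hypotheses. Symmetry of $G\Sigma$ is immediate: $\Sigma = hI$ is scalar, so $G\Sigma = hG$ is symmetric because $A$ is. The spectral radius condition requires $\rho(I - \tsfrac{h}{2}A) < 1$, which holds provided $0 < h < 4/\lambda_{\max}(A)$; I would note this as an implicit standing assumption (otherwise MALA itself would not be a sensible proposal with the intended stationary structure).

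With the hypotheses in place, Corollary \ref{lem equiv} yields explicit formulae
\[
    M = \Sigma^{-1}(I+G), \qquad \mathcal{A} = \Sigma^{-1}(I - G^2), \qquad N = MG, \qquad \beta = Mg.
\]
The remainder is a short routine calculation: $\Sigma^{-1} = h^{-1} I$ and $I + G = 2(I - \tsfrac{h}{4}A)$ give $M = \tsfrac{2}{h}(I - \tsfrac{h}{4}A)$; expanding $I - G^2 = hA - \tsfrac{h^2}{4}A^2 = h(I - \tsfrac{h}{4}A)A$ gives $\mathcal{A} = (I - \tsfrac{h}{4}A)A$; and then $N = MG$ and $\beta = Mg$ are immediate substitutions producing the stated expressions.

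There is essentially no obstacle here — the content of the theorem is the identification, not the verification, and every matrix in sight is a polynomial in $A$ so all the factorizations commute without fuss. The only point requiring care is the spectral radius restriction on $h$, which is a necessary hypothesis for Corollary \ref{lem equiv} to apply and should be flagged even if the theorem statement suppresses it.
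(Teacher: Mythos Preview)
Your proposal is correct and follows exactly the paper's approach: the paper's entire proof is the single sentence ``Identifying this with the AR(1) process \eqref{eq ar1} and applying Corollary \ref{lem equiv} we have proved the following theorem.'' You have simply spelled out the identification and the routine algebra that the paper leaves implicit, and your observation about the spectral-radius restriction on $h$ is a reasonable addition that the paper suppresses.
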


Thus, MALA corresponds to a matrix splitting where $M$ and $N$ are functions of $A$ and our theory applies.  An important feature of MALA is that $\hat{r}_i = 0$ for all $i$.  This greatly simplifies the results in Theorems \ref{thm accept} and \ref{thm jumpsize} and we recover \cite[Cor. 1]{BRS2009} and the simpler results in \cite[Thm. 7]{RR2001}.  The theorem below is a special case of Theorem \ref{thm genlang conv} so we omit the proof.

\begin{theorem}
\label{thm MALA conv}
If there exist positive constants $c$ and $C$, and $\kappa \geq 0$ such that the eigenvalues $\lambda_i^2$ of $A$ satisfy
$$
	c i^\kappa \leq \lambda_i \leq C i^\kappa
$$
and if $h = l^2 d^{-r}$ for some $l >0$ and $r = \tsfrac{1}{3} + 2 \kappa$ then MALA satisfies
$$
	\mathrm{E}[\alpha(x,y)] \rightarrow 2 \Phi\left( -\tsfrac{l^3 \sqrt{\tau}}{8} \right)
$$
and
\begin{equation}
\label{eq jump}
	\mathrm{E}[(x_i-x_i)^2] \rightarrow 2 l^2 d^{-r} \Phi\left( -\tsfrac{l^3 \sqrt{\tau}}{8} \right) + \mathrm{o}(d^{-r})
\end{equation}
as $d \rightarrow \infty$ where $\tau = \lim_{d \rightarrow \infty} \frac{1}{d^{1 + 6 \kappa}} \sum_{i=1}^d \lambda_i^6$.
\end{theorem}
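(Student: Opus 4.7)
The plan is to invoke Theorems~\ref{thm accept} and \ref{thm jumpsize}, exploiting the special structure of MALA to simplify the general expressions. The key observation is that because $\beta = (I - \tfrac{h}{4}A)b$ and $\mathcal{A} = (I - \tfrac{h}{4}A)A$ from Theorem~\ref{thm MALA split}, we have $\mathcal{A}^{-1}\beta = A^{-1}b$, so $\hat r_i = m_i - \tilde m_i = 0$ for every $i$. Consequently $T_{0i} = T_{1i} = T_{2i} = 0$, so the Lyapunov condition \eqref{eq Tcond} only needs to be checked for $j = 3,4,5$, and the acceptance-rate computation reduces to handling $\mu_i = T_{3i} + T_{4i}$ and $\sigma_i^2 = 2T_{3i}^2 + 2T_{4i}^2 + T_{5i}^2$.

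Next I would record the explicit forms $r_i = \tfrac{h}{4}\lambda_i^2$, $G_i = 1 - \tfrac{h}{2}\lambda_i^2$, $g_i = h\lambda_i^2(1 - \tfrac{h}{4}\lambda_i^2)$, and $\tilde\lambda_i^2 = \lambda_i^2(1 - \tfrac{h}{4}\lambda_i^2)$, which follow directly from the MALA splitting. Under the scaling $h = l^2 d^{-r}$ with $r = \tfrac{1}{3}+2\kappa$ and the bound $\lambda_i \leq C i^\kappa$, we have $h\lambda_i^2 = O(d^{-1/3})$ uniformly in $1 \le i \le d$, so Taylor expansions in $h\lambda_i^2$ are valid. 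The identity $T_{3i} + T_{4i} = -\tfrac{1}{2} r_i g_i \tilde r_i$ combined with $\tilde r_i = r_i + O(h^2\lambda_i^4)$ gives $\mu_i \sim -\tfrac{h^3 \lambda_i^6}{32}$, and the leading part of $T_{5i}^2$ gives $\sigma_i^2 \sim \tfrac{h^3 \lambda_i^6}{16}$ (the $T_{3i}^2, T_{4i}^2$ contributions are $O(h^4\lambda_i^8)$ and vanish). Summing and using the definition of $\tau$ yields $\mu \to -l^6\tau/32$ and $\sigma^2 \to l^6\tau/16$, so $\mu + \sigma^2/2 = 0$ and $\mu/\sigma = -l^3\sqrt\tau/8$. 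Substituting into \eqref{eq uf1} produces $\mathrm{E}[\alpha(x,y)] \to 2\Phi(-l^3\sqrt\tau/8)$.

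The Lyapunov condition \eqref{eq Tcond} with $\delta = 1$ reduces, via the polynomial form of the $T_{ji}$ and $\lambda_i \le C i^\kappa$, to verifying decay of expressions $d^{-\alpha}\sum_{i=1}^d i^{\beta}$; the critical check is $T_{5i}$, where the choice $r = \tfrac{1}{3} + 2\kappa$ is precisely what makes $\sum T_{5i}^2$ tend to a nonzero constant while $\sum |T_{5i}|^3 = O(d^{-1/2})$. For the jump size, since $\hat r_i = 0$, $U_1$ collapses to $\tilde g_i^2/\lambda_i^2 + g_i/\tilde\lambda_i^2$, and the cancellation $g_i/\tilde\lambda_i^2 = h$ holds exactly for MALA, giving $U_1 = h + \tfrac{h^2 \lambda_i^2}{4}$. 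Lemma~\ref{lem 5} ensures Theorem~\ref{thm accept} still applies to the truncated Lyapunov sums in $U_2$, so $U_2 \to 2\Phi(-l^3\sqrt\tau/8)$. The remaining obstacle is controlling $U_3$, but the Cauchy--Schwarz bound of Theorem~\ref{thm jumpsize} gives $(\sigma_i^2 + \mu_i^2)^{1/2} = O(h^{3/2})$ and, after inserting $\hat r_i = 0$, a second factor of order $O(h^{1/2})$, so $|U_3| = O(h^2) = o(d^{-r})$. Combining yields $\mathrm{E}[(x_i'-x_i)^2] \to 2l^2 d^{-r}\Phi(-l^3\sqrt\tau/8) + o(d^{-r})$. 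The subtlest step is the bookkeeping in the Lyapunov verification: because $r = \tfrac{1}{3} + 2\kappa$ is the sharp scaling, one must carefully ensure the Taylor remainders in $h\lambda_i^2$ are genuinely lower order uniformly in $i$.
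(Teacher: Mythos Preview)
Your approach is correct and is essentially what the paper does: both arguments plug the MALA splitting data into Theorems~\ref{thm accept} and~\ref{thm jumpsize}, using $\hat r_i=0$ so that only $T_{3i},T_{4i},T_{5i}$ survive, and then identify the leading behaviour of $\mu$ and $\sigma^2$ via the scaling $h\lambda_i^2=O(d^{-1/3})$. The paper simply packages this as the special case $\theta=0$, $V=I$ of the more general Theorem~\ref{thm genlang conv} (whose proof follows exactly your outline for arbitrary $\theta$), rather than writing out the MALA computation separately.

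One small slip: after setting $\hat r_i=0$, the second factor in the bound on $U_3$ has dominant term $3g_i^2/\tilde\lambda_i^4=3h^2$, so that factor is $O(h)$, not $O(h^{1/2})$. This is harmless, since combined with $(\sigma_i^2+\mu_i^2)^{1/2}=O(h^{3/2})$ (for fixed $i$) you get $|U_3|=O(h^{5/2})=o(d^{-r})$, which is even better than the $O(h^2)$ you quote.
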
  

Thus, the performance of MALA depends on the choice of $h$ which is usually tuned (by tuning $l$) to maximise the expected jump distance.  From \eqref{eq jump}, using $s = l \tau^{1/6}/2$, we have 
$$
	\max_{l > 0} 2 l^2 d^{-r} \Phi\left( -\tsfrac{l^3 \sqrt{\tau}}{8} \right) = \max_{s > 0} \frac{8 d^{-r}}{\tau^{1/3}} s^2 \Phi(-s^3),
$$
which is maximised at $s_0=0.8252$, independent of $\tau$.  Therefore, the acceptance rate that maximises expected jump distance is $2 \Phi(-s_0^3) = 0.574$.  This result was first stated in \cite{RR1998}, then more generally in \cite{RR2001,BRS2009}.  In practice, $h$ (equivalently $l$) is adjusted so that the acceptance rate is approximately $0.574$ to maximise the expected jump size.  This acceptance rate is independent of $\tau$, so it is independent of the eigenvalues of $A$.  However, the efficiency of MALA still depends on $\tau$, and as $\tau$ increases the expected square jump distance will decrease by a factor $\tau^{1/3}$.  

The rationale for studying the case when $d \rightarrow \infty$ is that it is a good approximation for cases when $d$ is `large' and finite (see eg. \cite[Fig. 1]{RR1998} or \cite{RR2001}).  However, the results above suggest that we should take $h \rightarrow 0$ as $d \rightarrow \infty$, to achieve at best an expected jump size that also tends towards $0$ as $d \rightarrow \infty$.  The only good point about these results is that it demonstrates superior asymptotic performance of MALA over RWM (see \cite[Thms. 1-4 and Cor. 1]{BRS2009} and \cite[Fig. 1]{RR1998}).

To understand the convergence of MALA to equilibrium (burn in) we would like to know the ``spectral gap'' or second largest eigenvalue of the transition kernel, as this determines the rate of convergence.  As far as we are aware this is an open problem.

We can also use Theorem \ref{thm MALA split} to analyse the unadjusted Langevin algorithm (ULA) in \cite{RT1996}, which is simply the MALA proposal chain from \eqref{MALA prop} without the accept/reject step in the MH algorithm.  From Theorem \ref{thm MALA split} we see that it does not converge to the correct target distribution since $\mathcal{A} \neq A$.  Instead of converging to $N(A^{-1}b, A^{-1})$, ULA converges to $N(A^{-1}b,\mathcal{A}^{-1})$.  This is the reason why it is usually used as a proposal in the MH algorithm (MALA).  Indeed, the authors of \cite{RT1996} note that ULA has poor convergence properties.  We see here that it converges to the wrong target distribution, and from \cite{F2013,FP2015} we know its convergence rate to this wrong target distribution depends on the spectral radius of $G = I-\tsfrac{h}{2}A$, which is close to $1$ when $h$ is small.

Despite possibly having slow convergence per iteration, both MALA and ULA are cheap to compute.  Since $G = I-\tsfrac{h}{2}A$, we only require a single matrix-vector multiplication with $A$ at each iteration and since $\Sigma = h I$, i.i.d. sampling from $N(0,\Sigma)$ at each iteration is cheap.

\subsection{Discretized Langevin diffusion - more general algorithms}

It is possible to generalise MALA by `preconditioning' the Langevin diffusion process and using a discretization scheme that is not Euler-Maruyama.  For symmetric positive definite matrix $V \in \bbR^{d \times d}$ (the `preconditioner') consider a Langevin process $\{z_t\}$ satisfying the stochastic differential equation
\begin{equation}
\label{gen langevin}
	\frac{\dd z_t}{\dd t} = \frac{1}{2} V \nabla \log \pi(z_t) + \frac{\dd W_t}{\dd t}
\end{equation}
where $W_t$ is Brownian motion in $\bbR^d$ with covariance $V$.  For $\theta \in [0,1]$, time step $h >0$ and current state $x \in \bbR^d$ define a proposal $y \in \bbR^d$ by discretizing \eqref{gen langevin} as
$$
	y - x = \tsfrac{h}{2} V \nabla \log \pi(\theta y + (1-\theta)x) + \sqrt{h} \nu
$$
where $\nu \sim N(0,V)$.  Equivalently, when the target is $N(A^{-1}b,A^{-1})$,
\begin{equation}
\label{gen langevin prop}
	y = (I + \tsfrac{\theta h}{2} VA)^{-1} \left[ (I - \tsfrac{(1-\theta)h}{2} VA)x + \tsfrac{h}{2} V b + (h V)^{1/2} \xi \right]
\end{equation}
where $\xi \sim N(0,I)$.

Different choices of $\theta$ and $V$ give different AR(1) process proposals.  For example, MALA has $\theta = 0$ and $V = I$ and the preconditioned Crank-Nicolson algorithm (pCN) corresponds to $\theta = \tsfrac{1}{2}$ and $V = A^{-1}$.  Table \ref{tab1} describes several more examples.

\begin{table}
\begin{tabular}{|l|l|p{8.5cm}|}
\hline
$\theta = 0$ & $V = I$ & MALA and ULA, see e.g. \cite{RT1996}.  In \cite{BRS2009} the Simplified Langevin Algorithm (SLA) is equivalent to MALA for Gaussian target distributions. \\ \hline
$\theta = 0$ & $V = A^{-1}$ & Proposal used in \cite{PST2012}. After a change of variables $x \leftrightarrow Q^T V^{-1/2}x$ it is the Preconditioned Simplified Langevin Algorithm (P-SLA) as in \cite{BRS2009}. \\ \hline
$\theta \in [0,1]$ & $V = I$ & Proposal for the so-called $\theta$-SLA method in \cite{BRS2009}.  Called Crank-Nicolson method in \cite{CRSW2013} when $\theta = \tsfrac{1}{2}$. \\ \hline
$\theta = \tsfrac{1}{2}$ & $V = A^{-1}$ & Preconditioned Crank-Nicolson method (pCN), see e.g. \cite{CRSW2013}.  
\\ \hline
\end{tabular}
\caption{Different choices of $\theta$ and $V$ in \eqref{gen langevin prop} lead to different proposals for the MH algorithm.  Other choices are possible.}
\label{tab1}
\end{table}

Identifying \eqref{gen langevin prop} with \eqref{eq ar1} and applying Corollary \ref{lem equiv} we can prove the following theorem.

\begin{theorem}
\label{thm genlang}
The general Langevin proposal \eqref{gen langevin prop} corresponds to the matrix splitting
\begin{align*}
	M &= \tsfrac{2}{h} V^{-1/2} W (I + \tsfrac{\theta h}{2} B ) V^{-1/2}, &
	\mathcal{A} &= V^{-1/2} W B V^{-1/2} = \tilde{W} A, \\
	N &= \tsfrac{2}{h} V^{-1/2} W (I - \tsfrac{(1-\theta)h}{2} B) V^{-1/2}, &
	\beta &= V^{-1/2} W V^{1/2} b = \tilde{W} b,
\end{align*}
where $B = V^{1/2} A V^{1/2}$, $W = I + (\theta-\tsfrac{1}{2}) \tsfrac{h}{2} B$ and $\tilde{W} = I + (\theta-\tsfrac{1}{2}) \tsfrac{h}{2} AV$.
\end{theorem}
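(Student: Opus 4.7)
The plan is to identify \eqref{gen langevin prop} with the AR(1) form \eqref{eq ar1}, verify the symmetry hypothesis of Corollary \ref{lem equiv}, and then derive the stated splitting from that corollary, followed by algebraic simplification. Setting $P := I + \tsfrac{\theta h}{2}VA$ (so $P^T = I + \tsfrac{\theta h}{2}AV$), inspection of \eqref{gen langevin prop} reads off $G = P^{-1}(I - \tsfrac{(1-\theta)h}{2}VA)$, $g = P^{-1}\tsfrac{h}{2}Vb$, and the noise covariance $\Sigma = hP^{-1}V(P^T)^{-1}$.

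The first substantive step is to verify that $G\Sigma = (G\Sigma)^T$. Two elementary identities do the work: any polynomial in $VA$ commutes with $P$ (and $P^{-1}$); and $PV = VP^T$, which follows from $PV = V + \tsfrac{\theta h}{2}VAV = VP^T$ using self-adjointness of $VAV$. Iterating the second gives $P^{-n}V = V(P^T)^{-n}$ for all $n$, and combining it with the commutation of $(I - \tsfrac{(1-\theta)h}{2}VA)$ with $P$ lets me push each $V$ and each power of $P$ through $G\Sigma$, rewriting it in a form which is manifestly equal to its own transpose.

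With Corollary \ref{lem equiv} now applicable, I get $M = \Sigma^{-1}(I + G)$, $N = MG$, $\mathcal{A} = M(I-G)$ and $\beta = Mg$. Elementary algebra gives $I + G = 2P^{-1}W'$ with $W' := I + (\theta-\tsfrac{1}{2})\tsfrac{h}{2}VA$, and $I - G = \tsfrac{h}{2}P^{-1}VA$. Substituting $\Sigma^{-1} = h^{-1}P^T V^{-1}P$ produces raw formulae in $P$, $V$, $V^{-1}$ and $VA$. To cast them in the stated form I use the similarity relations $V^{-1/2}P\, V^{1/2} = I + \tsfrac{\theta h}{2}B$, $V^{-1/2}W'V^{1/2} = W$, and $V^{-1/2}W V^{1/2} = \tilde{W}$, where $B = V^{1/2}AV^{1/2}$. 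Conjugation by $V^{-1/2}$ then reduces $M$ to $\tsfrac{2}{h}V^{-1/2}W(I + \tsfrac{\theta h}{2}B)V^{-1/2}$, and the expressions for $N$, $\mathcal{A}=\tilde{W}A$ and $\beta = V^{-1/2}WV^{1/2}b = \tilde{W}b$ drop out by the same manipulation.

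The main obstacle is pure bookkeeping: $VA$ and $AV$ are generally non-symmetric and do not commute, so every step requires careful transfer between them through $V$ or $V^{\pm 1/2}$. The observation that tames the calculation is that once everything is conjugated by $V^{-1/2}$, every matrix in sight becomes a polynomial in the symmetric matrix $B$; all such polynomials commute with each other, identities such as $M = M^T$ become manifest, and the stated splitting emerges directly.
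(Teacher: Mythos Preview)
Your proof is correct and follows essentially the same route as the paper: identify $G$, $g$, $\Sigma$ from \eqref{gen langevin prop}, check that $G\Sigma$ is symmetric, apply Corollary \ref{lem equiv}, and simplify. The only minor difference is that the paper verifies symmetry of $G\Sigma$ immediately by conjugating with $V^{1/2}$ (writing $G = V^{1/2}(I+\tsfrac{\theta h}{2}B)^{-1}(I-\tsfrac{(1-\theta)h}{2}B)V^{-1/2}$ and $\Sigma = hV^{1/2}(I+\tsfrac{\theta h}{2}B)^{-2}V^{1/2}$), which is exactly the observation you reach at the end of your argument; your preliminary commutation identities $PV=VP^T$ are correct but unnecessary once that conjugation is in hand.
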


\begin{proof}
Take $G = (I + \frac{\theta h}{2}VA)^{-1}(I - \tsfrac{(1-\theta)h}{2} VA)$, $g = \tsfrac{h}{2}(I + \frac{\theta h}{2}VA)^{-1} V b$ and $\Sigma = (I + \frac{\theta h}{2}VA)^{-1} (hV) (I + \frac{\theta h}{2}AV)^{-1}$, then \eqref{gen langevin prop} is the same as \eqref{eq ar1}.  To apply Corollary \ref{lem equiv} we first check that $G \Sigma$ is symmetric.  We have
$$
	G = V^{1/2} (I + \tsfrac{\theta h}{2} B)^{-1} (I - \tsfrac{(1-\theta)h}{2}B) V^{-1/2}
	\quad \mbox{and} \quad
	\Sigma = h V^{1/2} (I + \tsfrac{\theta h}{2}B)^{-2} V^{1/2}
$$
so that 
\begin{align*}
	G \Sigma = h V^{1/2} (I + \tsfrac{\theta h}{2}B)^{-1} (I - \tsfrac{(1-\theta)h}{2}B) (I + \tsfrac{\theta h}{2}B)^{-2} V^{1/2}
\end{align*}
which is symmetric since $V$ and $B$ are symmetric.  Applying Corollary \ref{lem equiv} then yields the result.
\end{proof}

Therefore, the proposal target for \eqref{gen langevin prop} is $N(A^{-1}b, (\tilde{W}A)^{-1})$, and if $\theta \neq \tsfrac{1}{2}$, then $\tilde{W} \neq I$, $\mathcal{A} \neq A$, and the target and proposal target disagree.  

If $\theta = \tsfrac{1}{2}$, then the proposal target and target distributions are the same, the MH accept/reject step is redundant, and we can use \cite{F2013,FP2015,FP2014} to analyse and accelerate the AR(1) process \eqref{gen langevin prop}.  

To evaluate the performance of the MH algorithm with proposal \eqref{gen langevin prop} when $\mathcal{A} \neq A$ we would like to be able to apply Theorems \ref{thm accept} and \ref{thm jumpsize}, but we see in Theorem \ref{thm genlang} that the splitting matrices are functions of both $B$ and $V$ (not $A$) so we cannot directly apply our new theory.  A change of coordinates will fix this!  The following lemma is a result of simple algebra and Theorem \ref{thm genlang}.

\begin{lemma}
\label{lem ch1}
Under the change of coordinates 
$$
	x \leftrightarrow V^{-1/2} x
$$
the MH algorithm with target $N(A^{-1}b,A^{-1})$ and proposal \eqref{gen langevin prop} is transformed to the MH algorithm defined by
\begin{equation}
\label{eq hat1}
\begin{split}
	\mbox{Target:} & \qquad N(B^{-1} V^{1/2} b, B^{-1}), \\
	\mbox{Proposal:} & \qquad (I + \tsfrac{\theta h}{2} B) y = (I - \tsfrac{(1-\theta)h}{2} B) x + \tsfrac{h}{2} V^{1/2} b + h^{1/2} \xi, 
\end{split}
\end{equation}
where $\xi \sim N(0,I)$ and $B = V^{1/2} A V^{1/2}$.  Moreover, the proposal \eqref{eq hat1} corresponds to the matrix splitting $B = M-N$
\begin{align*}
	M &= \tsfrac{2}{h} W (I + \tsfrac{\theta h}{2} B ), &
	\mathcal{A} &= W B, \\
	N &= \tsfrac{2}{h} W (I - \tsfrac{(1-\theta)h}{2} B), &
	\beta &= W V^{1/2} b,
\end{align*}
where $W = I + (\theta-\tsfrac{1}{2}) \tsfrac{h}{2} B$.
\end{lemma}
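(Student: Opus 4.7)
The plan is to first execute the change of coordinates on both the target distribution and the proposal, and then to invoke Theorem \ref{thm genlang} with an appropriate substitution to read off the matrix splitting.

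For the target: if $x \sim N(A^{-1}b, A^{-1})$ and $\tilde x = V^{-1/2}x$, then $\tilde x$ is Gaussian with mean $V^{-1/2}A^{-1}b$ and covariance $V^{-1/2}A^{-1}V^{-1/2}$. Symmetry of $V$ and $A$ gives $V^{-1/2}A^{-1}V^{-1/2} = (V^{1/2}AV^{1/2})^{-1} = B^{-1}$, and the mean simplifies to $V^{-1/2}A^{-1}V^{-1/2}\cdot V^{1/2}b = B^{-1}V^{1/2}b$, as claimed.

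For the proposal: rewrite \eqref{gen langevin prop} as $(I + \tsfrac{\theta h}{2}VA)y = (I - \tsfrac{(1-\theta)h}{2}VA)x + \tsfrac{h}{2}Vb + (hV)^{1/2}\xi$ and left-multiply by $V^{-1/2}$. Inserting $V^{1/2}V^{-1/2} = I$ after each matrix factor and using the conjugation $V^{-1/2}(VA)V^{1/2} = V^{1/2}AV^{1/2} = B$ turns every $VA$-factor into the corresponding $B$-factor. The noise term becomes $V^{-1/2}(hV)^{1/2}\xi$, which has mean $0$ and covariance $V^{-1/2}(hV)V^{-1/2} = hI$; one may therefore rewrite it as $\sqrt h\,\tilde\xi$ with $\tilde\xi\sim N(0,I)$, producing \eqref{eq hat1}.

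The matrix splitting in \eqref{eq hat1} is then obtained by observing that \eqref{eq hat1} is exactly \eqref{gen langevin prop} with $A\mapsto B$, $b\mapsto V^{1/2}b$, and preconditioner $V\mapsto I$. Applying Theorem \ref{thm genlang} under these substitutions, the ``new $B$'' of that theorem is $I^{1/2}\cdot B\cdot I^{1/2} = B$, the matrices $W$ and $\tilde W$ both collapse to $I + (\theta-\tsfrac{1}{2})\tsfrac{h}{2}B$, and the external $V^{\pm 1/2}$ factors drop out, giving precisely the formulas for $M,N,\mathcal{A},\beta$ stated in the lemma. There is no substantive obstacle here; the only step requiring care is the transformation of the Gaussian noise under the change of variables, which is routine via $\operatorname{Cov}(Lz) = L\operatorname{Cov}(z)L^T$.
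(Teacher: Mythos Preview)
Your proof is correct and matches the paper's own approach: the paper simply states that the lemma ``is a result of simple algebra and Theorem~\ref{thm genlang},'' which is exactly your two-step plan of carrying out the coordinate change on the target and proposal by hand and then reading off the splitting from Theorem~\ref{thm genlang} with the substitution $A\mapsto B$, $b\mapsto V^{1/2}b$, $V\mapsto I$. The only cosmetic point is that your noise calculation can be shortened to $V^{-1/2}(hV)^{1/2}\xi = h^{1/2}\xi$ directly, since the symmetric positive-definite square roots commute.
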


Thus, we have transformed the MH algorithm with target $N(A^{-1}b,A^{-1})$ and proposal \eqref{gen langevin prop} to a MH algorithm where the splitting matrices are functions of the target precision matrix, and we can apply Theorems \ref{thm accept} and \ref{thm jumpsize} to \eqref{eq hat1} to find the expected acceptance rate and expected jump size of the MH algorithm with target $N(A^{-1}b,A^{-1})$ and proposal \eqref{gen langevin prop}.

\begin{theorem}
\label{thm genlang conv}
Suppose there are constants $c,C > 0$ and $\kappa \geq 0$ such that the eigenvalues $\lambda_i^2$ of $V A$ (equivalently, $V^{1/2}AV^{1/2}$) satisfy
\[
	c i^\kappa \leq \lambda_i \leq C i^\kappa \qquad \mbox{for $i=1,\dotsc,d$}.
\]
If $h = l^2 d^{-r}$ for $r=\tsfrac{1}{3} + 2 \kappa$ and $l > 0$, and $\tau = \lim_{d\rightarrow \infty} \frac{1}{d^{6\kappa + 1}} \sum_{i=1}^d \lambda_i^6$ then a MH algorithm with proposal \eqref{gen langevin prop} and target $N(A^{-1}b,A^{-1})$ satisfies
\begin{equation}
\label{eq expect}
	\mathrm{E}[\alpha(x,y)] \rightarrow 2 \Phi\left( -\frac{l^3 |\theta-\tsfrac{1}{2}| \sqrt{\tau}}{4} \right) 
\end{equation}
and for normalised eigenvector $q_i$ of $V^{1/2}A V^{1/2}$ corresponding to $\lambda_i^2$,
\begin{equation}
\label{eq jump2}
	\mathrm{E}[ |q_i^T V^{-1/2} (x' - x)|^2 ] \rightarrow 2 l^2 d^{-r} \Phi\left( -\frac{l^3 |\theta-\tsfrac{1}{2}| \sqrt{\tau}}{2} \right) + \mathrm{o}(d^{-r})
\end{equation}
as $d \rightarrow \infty$.
\end{theorem}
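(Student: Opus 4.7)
The plan is to reduce everything to the diagonal setting via Lemma \ref{lem ch1}, then apply Theorems \ref{thm accept} and \ref{thm jumpsize} to the transformed algorithm \eqref{eq hat1}. Under the change of coordinates $x \leftrightarrow V^{-1/2}x$, the proposal target of \eqref{eq hat1} is $N((WB)^{-1}WV^{1/2}b, (WB)^{-1}) = N(B^{-1}V^{1/2}b, (WB)^{-1})$, whose mean agrees with the target mean $B^{-1}V^{1/2}b$. Hence $m_i = \tilde{m}_i$ and the crucial simplification $\hat{r}_i = 0$ holds for every $i$. This kills $T_{0i}$, $T_{1i}$ and $T_{2i}$ in Theorem \ref{thm accept}, so only $T_{3i}$, $T_{4i}$ and $T_{5i}$ contribute, and $\mu_i = T_{3i}+T_{4i} = -\tfrac{1}{2}r_i g_i \tilde{r}_i$.

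Next I would carry out the small-$h$ asymptotics. The eigenvalues $\lambda_j^2$ of $B = V^{1/2}AV^{1/2}$ satisfy $h\lambda_j^2 \le l^2 C^2 d^{2\kappa - r} = l^2 C^2 d^{-1/3}$ uniformly in $j$, so I can Taylor-expand the eigenvalues $G_j = (1-\tfrac{(1-\theta)h}{2}\lambda_j^2)/(1+\tfrac{\theta h}{2}\lambda_j^2)$ of $G$, obtaining $G_j = 1 - \tfrac{h\lambda_j^2}{2} + O((h\lambda_j^2)^2)$, and similarly $\tilde g_j = \tfrac{h\lambda_j^2}{2}+O(\cdot)$, $g_j = h\lambda_j^2 + O(\cdot)$, $r_j = -(\theta-\tfrac{1}{2})\tfrac{h\lambda_j^2}{2}$, $\tilde r_j = -(\theta-\tfrac{1}{2})\tfrac{h\lambda_j^2}{2}+O(\cdot)$. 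Substituting gives
\begin{align*}
\mu_j &= -\tfrac{(\theta-1/2)^2}{8}\,h^3\lambda_j^6 + o(h^3\lambda_j^6), \\
\sigma_j^2 &= T_{5j}^2 + O(h^4\lambda_j^8) = \tfrac{(\theta-1/2)^2}{4}h^3\lambda_j^6 + o(h^3\lambda_j^6),
\end{align*}
so $\mu_j = -\tfrac12\sigma_j^2$ to leading order. Summing and using $h^3\sum_{i=1}^d\lambda_i^6 = l^6 d^{-1-6\kappa}\sum_{i=1}^d\lambda_i^6 \to l^6\tau$, I get $\mu \to -\tfrac{(\theta-1/2)^2 l^6\tau}{8}$, $\sigma^2 \to \tfrac{(\theta-1/2)^2 l^6\tau}{4}$, whence $\mu = -\sigma^2/2$ in the limit.

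Before invoking Theorem \ref{thm accept} I must verify \eqref{eq Tcond} for $j=3,4,5$. The dominant term is $T_{5i}\sim h^{3/2}\lambda_i^3$; using the eigenvalue bounds,
\[
\frac{\sum_{i=1}^d |T_{5i}|^{2+\delta}}{(\sum_{i=1}^d T_{5i}^2)^{1+\delta/2}} \;\lesssim\; \frac{h^{(2+\delta)3/2} d^{1+(6+3\delta)\kappa}}{\bigl(h^3 d^{1+6\kappa}\bigr)^{1+\delta/2}} \;=\; O(h^{3\delta/2}) \;\to\;0,
\]
and analogous (in fact easier) estimates handle $j=3,4$. Theorem \ref{thm accept} then yields $\mathrm{E}[\alpha(x,y)] \to \Phi(\mu/\sigma) + e^{\mu+\sigma^2/2}\Phi(-\sigma-\mu/\sigma) = 2\Phi(-\sigma/2)$, which equals the right side of \eqref{eq expect}.

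For \eqref{eq jump2} I apply Theorem \ref{thm jumpsize} in the transformed coordinates; since $V^{-1/2}(x'-x)$ pulls back to the ordinary coordinate difference, it suffices to bound $\mathrm{E}[(\hat x_i'-\hat x_i)^2]$. With $\hat{r}_i=0$ the bracket $U_1$ collapses to $\tilde g_i^2/\lambda_i^2 + g_i/\tilde\lambda_i^2 = h + O(h^2\lambda_i^2) = h(1+o(1))$, while $U_2$ involves $\mu^-,\sigma^-$ which by Lemma \ref{lem 5} and the single-term negligibility have the same limits $\mu,\sigma$ as the full sums, giving $U_2 \to 2\Phi(-\sigma/2)$. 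Finally, $|U_3|$ is bounded by the product $(\sigma_i^2+\mu_i^2)^{1/2}\cdot(\text{fourth moment bound})^{1/2}$, and each factor is $o(1)$ relative to $h$, so $U_3 = o(h) = o(d^{-r})$. Combining yields \eqref{eq jump2}.

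The main obstacle is not conceptual but bookkeeping: carefully verifying that all subleading terms in the expansions of $\mu_i$, $\sigma_i^2$, $U_1$, $U_3$ summed over $i$ remain lower order than the leading $h^3\sum \lambda_i^6$ (respectively $h$) under the scaling $h = l^2 d^{-r}$ with $r=\tfrac{1}{3}+2\kappa$. The eigenvalue bound $ci^\kappa\le\lambda_i\le Ci^\kappa$ is used repeatedly to compare power sums of the $\lambda_i$, and the choice of $r$ is exactly what makes the dominant contributions $O(1)$ rather than $0$ or $\infty$.
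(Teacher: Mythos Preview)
Your proposal is correct and follows essentially the same route as the paper: reduce via Lemma \ref{lem ch1} so that the splitting matrices become functions of $B$, exploit $\hat r_i=0$ to kill $T_{0i},T_{1i},T_{2i}$, compute the leading behaviour of $T_{3i},T_{4i},T_{5i}$, verify the Lyapunov condition \eqref{eq Tcond}, and then invoke Theorems \ref{thm accept} and \ref{thm jumpsize}. The paper carries the exact closed-form expressions (writing $t_i=h\lambda_i^2$, $\rho=(\theta-\tfrac12)/2$) and only simplifies at the end via a small lemma that $\sum t_i^s\to0$ for $s>3$, whereas you Taylor-expand from the outset; both lead to the same limits $\mu=-\tfrac{(\theta-1/2)^2 l^6\tau}{8}$, $\sigma^2=-2\mu$, and the same $U_1,U_2,U_3$ asymptotics.

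One arithmetic slip to fix: in your Lyapunov estimate for $T_{5i}$ the powers of $h$ in numerator and denominator are both $h^{3+3\delta/2}$ and cancel exactly, so the ratio is $O(d^{-\delta/2})$, not $O(h^{3\delta/2})$. The conclusion is unaffected. Also, your assertion that the cases $j=3,4$ are ``easier'' is not quite right: $|T_{3i}|,|T_{4i}|\sim h^2\lambda_i^4$ are individually smaller than $|T_{5i}|\sim h^{3/2}\lambda_i^3$, but the Lyapunov ratio for each $j$ separately works out to the same $O(d^{-\delta/2})$.
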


The proof of Theorem \ref{thm genlang conv} is in the Appendix.

We see that $V$ plays the same role of preconditioning as for solving a linear system and we should choose it minimise the condition number  of $VA$, so that $\tau$ is minimised.

Although MALA and more general discretizations of Langevin diffusion have been successfully analyzed in \cite{BRS2009}, all of these results are stated for product distributions and `small' perturbations of product distributions.  Theorem \ref{thm genlang conv} extends their theory (in particular \cite[Cor. 1]{BRS2009}) to the Gaussian case with non-diagonal covariance and $\theta \in [0,1]$.  See also \cite[Thm. 7]{RR2001}.

For efficiency, as well as considering the expected squared jump size, we must also consider the computing cost of the proposal \eqref{gen langevin prop}, which requires the action of $(I + \frac{\theta h}{2}VA)^{-1}$ and an independent sample from $N(0,V)$, as well as the actions of $V$ and $A$.

MALA, with $\theta = 0$ and $V = I$, is very cheap to compute because we only have to invert the identity matrix and sample from $N(0,I)$ at each iteration (as well as multiply by $A$).  

Alternatively, pCN, with $\theta = \tsfrac{1}{2}$ and $V = A^{-1}$, requires us to sample from $N(0,A^{-1})$ which we have assumed is infeasible because we cannot factorize $A$.

\subsection{Hybrid Monte Carlo}

Another type of AR(1) process proposal that fits our theory are proposals from the Hybrid (or Hamiltonian) Monte Carlo algorithm (HMC), see e.g. \cite{DKPR1987,BPRSS2010,N1993}.    

HMC treats the current state $x \in \bbR^d$ as the initial position of a particle, the initial momentum $p \in \bbR^d$ of the particle is chosen independently at random, and then the motion of the particle is evolved according to a Hamiltonian system for a fixed amount of time.  The final position of the particle is the proposal.  Instead of solving the Hamiltonian system exactly, the evolution of the particle is approximated using a reversible, symplectic numerical integrator.  For example, the leap-frog method (also called the Stormer-Verlet method) is an integrator that preserves a modified Hamiltonian, see e.g. \cite{HLW}.  Hence the proposal $y$, for target $N(A^{-1}b,A^{-1})$, is computed as follows; let $V \in \bbR^{d \times d}$ be a symmetric positive definite matrix and define a Hamiltonian function $H : \bbR^d \times \bbR^d \rightarrow \bbR$ by
\begin{equation}
\label{eq ham1}
	H(q,p) := \frac{1}{2} p^T V p + \frac{1}{2} q^T A q - b^T q. 
\end{equation}
Given a time step $h>0$, a number of steps $L \in \mathbb{N}$, and current state $x \in \bbR^d$, define $q_0 := x$, and sample $p_0 \sim N(0,V^{-1})$.  Then for $l=0,\dotsc,L-1$ compute
\begin{align*}
	p_{l+1/2} &= p_{l} - \tsfrac{h}{2} (A q_{l} - b), \\
	q_{l+1} &= q_{l} + h V p_{l+1/2}, \\
	p_{l+1} &= p_{l+1/2} - \tsfrac{h}{2} (A q_{l+1} - b).
\end{align*}
The proposal is then defined as $y := q_L$.  In matrix form we have
$$
	\left[ \twobyone{q_{l+1}}{p_{l+1}} \right] = K 	\left[ \twobyone{q_l}{p_l} \right] + J \left[ \twobyone{0}{\tsfrac{h}{2} b} \right]
$$
where $K,J \in \bbR^{2d \times 2d}$ are defined as
$$
	K = 
	\left[ \!\! \twobytwo{I}{0}{-\tsfrac{h}{2} A}{I} \right]
	\left[ \twobytwo{I}{h V}{0}{I}  \right]
	\left[ \!\! \twobytwo{I}{0}{-\tsfrac{h}{2} A}{I} \right]
	=\left[ \!\! \twobytwo{I-\tsfrac{h^2}{2}VA}{hV}{-h A + \tsfrac{h^3}{4}AVA}{I-\tsfrac{h^2}{2}AV} \!\right]
$$
and
$$
	J = 	
	\left[ \twobytwo{I}{0}{0}{I} \right]	
	+	
	\left[ \twobytwo{I}{0}{-\tsfrac{h}{2} A}{I} \right]
	\left[ \twobytwo{I}{h V}{0}{I} \right]
	=\left[ \twobytwo{2I}{hV}{-\tsfrac{h}{2}A}{2I - \tsfrac{h^2}{2}AV} \right].
$$
Hence, $y$ is given by 
\begin{equation}
\label{hmc prop2}
	\left[ \twobyone{y}{p_L} \right]
	=
	K^L \left[ \twobyone{x}{\xi} \right] + \sum_{l=0}^{L-1} K^l J \left[ \twobyone{0}{\tsfrac{h}{2}b} \right] 
	\quad\mbox{where $\xi \sim N(0,V)$},
\end{equation}
or equivalently, 
\begin{equation}
\label{hmc prop}
	y = (K^L)_{11} x + \left( SJ \left[ \twobyone{0}{\tsfrac{h}{2}b} \right] \right)_1 + (K^L)_{12} \xi
\end{equation}
where $\xi \sim N(0,V^{-1})$, $(K^L)_{ij}$ is the $ij$ block (of size $d\times d$) of $K^L$, $S = (I-K)^{-1} (I-K^L)$ and $(\cdot)_1$ are the first $d$ entries of the vector $(\cdot)$.

In the case of only one time step of the leap-frog integrator ($L=1$) then HMC is MALA \cite{BPRSS2010}.  Hence, we immediately know that the HMC proposal with $L=1$ is an AR(1) process where the proposal target and target distributions are not the same, and the expected acceptance rate and jump size are given by Theorem \ref{thm MALA conv}.  The case for $L>1$ is more complicated, but \eqref{hmc prop} is still an AR(1) process that can be expressed as a matrix splitting using \eqref{eq ar1b}.  The proofs of the following two results are in the Appendix.

\begin{theorem}
\label{thm HMC1}
The HMC proposal \eqref{hmc prop} corresponds to the matrix splitting 
\begin{align*}
	M &= \Sigma^{-1} (I+(K^L)_{11}), &
	\mathcal{A} &= \Sigma^{-1} (I - (K^L)_{11}^2), \\
	N &= \Sigma^{-1} (I+(K^L)_{11}) (K^L)_{11}, &
	\beta &= \Sigma^{-1} (I+(K^L)_{11}) \left(( SJ \left[ \twobyone{0}{\tsfrac{h}{2}b} \right] \right)_1, 
\end{align*}	
where $\Sigma = (K^L)_{12} V^{-1} (K^L)_{12}^T$.
\end{theorem}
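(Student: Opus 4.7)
The proof is a direct application of Corollary~\ref{lem equiv}. Matching the HMC proposal \eqref{hmc prop} against the AR(1) form \eqref{eq ar1}, we read off $G = (K^L)_{11}$, $g = \left(SJ\left[\twobyone{0}{\tsfrac{h}{2}b}\right]\right)_1$, and noise covariance $\Sigma = (K^L)_{12} V^{-1} (K^L)_{12}^T$. The expressions for $M$, $N$, $\mathcal{A}$, $\beta$ stated in the theorem are then exactly those produced by Corollary~\ref{lem equiv}. Consequently, the entire task reduces to verifying the two hypotheses of that corollary: (i) $\rho(G) < 1$, and (ii) $G\Sigma$ is symmetric.

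The main obstacle is (ii), because $G$ and $\Sigma$ involve products and powers of non-commuting pieces of $K$. My plan is to exploit the structure of leap-frog through a mass-rescaling change of variables. Let $T := \left[\twobytwo{V^{1/2}}{0}{0}{V^{-1/2}}\right]$ and $\tilde{K} := T^{-1} K T$. A direct computation shows every block of $\tilde{K}$ is a polynomial in $\tilde{A} := V^{1/2} A V^{1/2}$; hence by induction every block of $\tilde{K}^L = T^{-1} K^L T$ is also a polynomial in $\tilde{A}$. Writing $\tilde{A}_L := (\tilde{K}^L)_{11}$ and $\tilde{B}_L := (\tilde{K}^L)_{12}$, these blocks are therefore symmetric and commute with each other. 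Since $(K^L)_{11} = V^{1/2} \tilde{A}_L V^{-1/2}$ and $(K^L)_{12} = V^{1/2} \tilde{B}_L V^{1/2}$, straightforward substitution yields $\Sigma = V^{1/2} \tilde{B}_L^2 V^{1/2}$ and hence $G\Sigma = V^{1/2} \tilde{A}_L \tilde{B}_L^2 V^{1/2}$, which is symmetric because $\tilde{A}_L$ and $\tilde{B}_L$ commute.

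For (i), in the basis diagonalising $\tilde{A}$ the polynomial $\tilde{A}_L$ acts, on the eigenspace for eigenvalue $\lambda_i^2$, as the Chebyshev-type scalar $\cos(L\phi_i)$ with $\cos\phi_i = 1 - \tsfrac{h^2 \lambda_i^2}{2}$, which is the standard closed form for the leap-frog applied to a harmonic oscillator. Thus, in the stability regime $h^2 \lambda_i^2 < 4$ for all $i$, and outside the measure-zero set of resonances $L\phi_i \in \pi \mathbb{Z}$, the spectrum of $G$ lies strictly inside $(-1,1)$ and $\rho(G) < 1$. Given (i) and (ii), Corollary~\ref{lem equiv} delivers the matrix splitting claimed.
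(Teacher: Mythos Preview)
Your proof is correct and follows essentially the same route as the paper: both invoke Corollary~\ref{lem equiv}, and both establish the symmetry of $G\Sigma$ via the mass-rescaling conjugation $T = \mathcal{V} = \operatorname{diag}(V^{1/2},V^{-1/2})$, which reduces the blocks of $K^L$ to polynomials in $B = V^{1/2}AV^{1/2}$. You additionally verify the spectral-radius hypothesis $\rho(G)<1$ via the closed form $\cos(L\phi_i)$, which the paper's proof of this theorem leaves implicit (the eigenvalue formula appears separately as Theorem~\ref{thm uhmc conv}).
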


\begin{corollary}
\label{cor 15}
The matrix splitting from HMC satisfies $\mathcal{A}^{-1} \beta \! = A^{-1} b$.
\end{corollary}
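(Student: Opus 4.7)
The plan is to exploit a discrete-time fixed point of the leap-frog integrator, namely that starting the particle at the mode $q_0 = A^{-1}b$ with zero momentum $p_0=0$ leaves it there for every step. The corollary says exactly that the mean of the proposal target agrees with the mean of $\pi$, and viewed through \eqref{hmc prop} this is the same as saying that $x = A^{-1}b$ is a fixed point of the deterministic part of the proposal (i.e.\ with $\xi = 0$), which corresponds to running leap-frog from $(A^{-1}b,0)$.

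First I would reduce what must be shown. From Theorem \ref{thm HMC1},
\begin{equation*}
\mathcal{A}^{-1}\beta = (I-(K^L)_{11}^2)^{-1}(I+(K^L)_{11})\bigl(SJ[\tsfrac{0}{(h/2)b}]\bigr)_1 = (I-(K^L)_{11})^{-1}\bigl(SJ[\tsfrac{0}{(h/2)b}]\bigr)_1,
\end{equation*}
so it suffices to prove $\bigl(SJ[\tsfrac{0}{(h/2)b}]\bigr)_1 = (I-(K^L)_{11})A^{-1}b$. Next I would verify the fixed-point property by substituting $q_l = A^{-1}b$, $p_l = 0$ into the three leap-frog updates: $Aq_l - b = 0$ collapses both half-kicks, and the position drift $q_{l+1} = q_l + hVp_{l+1/2}$ leaves $q$ unchanged. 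Hence $(A^{-1}b,0)$ is a fixed point of the affine map $[q;p] \mapsto K[q;p] + J[\tsfrac{0}{(h/2)b}]$, i.e.
\begin{equation*}
(I-K)\Bigl[\tsfrac{A^{-1}b}{0}\Bigr] = J\Bigl[\tsfrac{0}{(h/2)b}\Bigr].
\end{equation*}

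Finally I would combine these two observations. Because $K$ and $K^L$ commute, so do $(I-K)^{-1}$ and $(I-K^L)$, and therefore
\begin{equation*}
SJ\Bigl[\tsfrac{0}{(h/2)b}\Bigr] = (I-K)^{-1}(I-K^L)J\Bigl[\tsfrac{0}{(h/2)b}\Bigr] = (I-K^L)(I-K)^{-1}J\Bigl[\tsfrac{0}{(h/2)b}\Bigr] = (I-K^L)\Bigl[\tsfrac{A^{-1}b}{0}\Bigr].
\end{equation*}
Reading off the first block of length $d$ gives $\bigl(SJ[\tsfrac{0}{(h/2)b}]\bigr)_1 = (I-(K^L)_{11})A^{-1}b$, which substituted into the reduction in the previous paragraph yields $\mathcal{A}^{-1}\beta = A^{-1}b$.

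The substantive step is really just the direct verification that $(A^{-1}b,0)$ is fixed by one leap-frog sweep; all remaining work is block-matrix bookkeeping. The only mild subtlety is the use of commutativity of $(I-K)^{-1}$ with $(I-K^L)$, which is immediate once one notes that $S$ is defined only when $(I-K)$ is invertible and that both factors are polynomials in $K$.
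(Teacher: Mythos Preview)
Your proof is correct and follows essentially the same route as the paper: reduce to showing $\bigl(SJ[\,0;\tsfrac{h}{2}b\,]\bigr)_1=(I-(K^L)_{11})A^{-1}b$, then verify the single-step identity $(I-K)[\,A^{-1}b;0\,]=J[\,0;\tsfrac{h}{2}b\,]$ and propagate it through $S=(I-K)^{-1}(I-K^L)$. Your framing of that single-step identity as the leap-frog fixed point at $(A^{-1}b,0)$ is a nice piece of intuition the paper leaves implicit, and you are also more explicit than the paper about the commutativity of $(I-K)^{-1}$ and $(I-K^L)$ that underlies the reduction.
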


These results imply that the proposal target distribution for HMC is $N(A^{-1}b, \mathcal{A}^{-1})$ rather than the desired target $N(A^{-1}b,A^{-1})$, hence why this AR(1) process is used as a proposal in the MH algorithm.  

For the analysis of HMC we require the eigenvalues of the iteration matrix.  A proof of the following result is in the Appendix.

\begin{theorem}
\label{thm uhmc conv}
Let $\lambda_i^2$ be eigenvalues of $VA$.  Then the iteration matrix $G = (K^L)_{11}$ for the HMC proposal has eigenvalues 
$$
	G_i = \cos( L \theta_i )
$$
where $\theta_i = -\cos^{-1} ( 1 - \tsfrac{h^2}{2} \lambda_i^2 )$.
\end{theorem}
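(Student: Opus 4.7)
The plan is to decouple the $2d\times 2d$ propagator $K$ into $d$ independent $2\times 2$ systems by simultaneously diagonalizing the blocks of $K$, and then compute the $(1,1)$ entry of $K_i^L$ for each $2\times 2$ system in closed form. The key observation is that although $VA$ is not symmetric, the matrix $B := V^{1/2} A V^{1/2}$ is symmetric positive definite and similar to $VA$, so we can write $B = Q\Lambda^2 Q^T$ with $Q$ orthogonal and $\Lambda^2 = \operatorname{diag}(\lambda_1^2,\dots,\lambda_d^2)$.

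First, I would apply the block similarity $T = \operatorname{diag}(V^{-1/2}, V^{1/2})$ to $K$. Direct computation of $TKT^{-1}$ replaces the off-diagonal $VA$ and $AV$ factors with $B$ everywhere and turns the $(1,2)$ block from $hV$ into $hI$ (the symplectic ``kinetic'' piece). Then a further block similarity by $\operatorname{diag}(Q^T, Q^T)$ diagonalizes each of the four $d\times d$ blocks, turning them into diagonal matrices in the $\lambda_i^2$. After permuting coordinates to interleave positions and momenta, the transformed matrix is block diagonal with $2\times 2$ blocks
\[
K_i = \begin{bmatrix} 1 - \tsfrac{h^2}{2}\lambda_i^2 & h \\ -h\lambda_i^2 + \tsfrac{h^3}{4}\lambda_i^4 & 1 - \tsfrac{h^2}{2}\lambda_i^2 \end{bmatrix}.
\]
In particular $(K^L)_{11}$ is similar (via $V^{1/2}Q$) to $\operatorname{diag}((K_i^L)_{11})$, so the eigenvalues of $(K^L)_{11}$ are simply the numbers $(K_i^L)_{11}$.

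Next I would compute $(K_i^L)_{11}$. A short calculation shows $\operatorname{tr}K_i = 2 - h^2\lambda_i^2$ and $\det K_i = 1$ (the symplectic property), so the Cayley–Hamilton identity reads $K_i^2 = (2-h^2\lambda_i^2) K_i - I$. Setting $\cos\theta_i := 1 - \tsfrac{h^2}{2}\lambda_i^2$, induction on $L$ gives the standard Chebyshev-type formula
\[
K_i^L = \frac{\sin L\theta_i}{\sin\theta_i}\, K_i - \frac{\sin(L-1)\theta_i}{\sin\theta_i}\, I.
\]
Reading off the $(1,1)$ entry and using the addition formula $\sin L\theta_i \cos\theta_i - \sin(L-1)\theta_i = \cos L\theta_i \sin\theta_i$ yields $(K_i^L)_{11} = \cos(L\theta_i)$, which is the claim.

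The only subtlety to flag is the choice of branch for $\theta_i$ when $h^2\lambda_i^2 \le 4$ (the linearly stable regime for leap-frog, in which $\cos\theta_i\in[-1,1]$ and $\theta_i$ is real), versus the unstable regime where $\cos\theta_i < -1$; in the latter case one still has $\det K_i = 1$ and the Chebyshev recursion is valid with $\theta_i$ complex, but the eigenvalues of $(K^L)_{11}$ then grow and the proposal will be essentially rejected. Since the theorem statement simply gives the eigenvalues as $\cos(L\theta_i)$ with $\theta_i$ defined via $\cos^{-1}$, this case is covered by analytic continuation of the same formula.
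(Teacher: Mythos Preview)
Your proof is correct and the reduction to $2\times 2$ blocks is exactly the paper's approach: the paper also conjugates by $\mathcal{V}^{-1}=\operatorname{diag}(V^{-1/2},V^{1/2})$ and then by $\operatorname{diag}(Q,Q)$ to obtain the same $2\times 2$ matrices $k_i=K_i$, and concludes that the eigenvalues of $(K^L)_{11}$ are the scalars $(k_i^L)_{11}$.

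The only genuine difference is in how $(k_i^L)_{11}$ is computed. The paper observes that $k_i$ is \emph{similar to a rotation}: writing $a_i=\lambda_i\sqrt{1-\tfrac{h^2}{4}\lambda_i^2}$ one has
\[
k_i=\begin{bmatrix}1&0\\0&a_i\end{bmatrix}
\begin{bmatrix}\cos\theta_i&-\sin\theta_i\\\sin\theta_i&\cos\theta_i\end{bmatrix}
\begin{bmatrix}1&0\\0&a_i^{-1}\end{bmatrix},
\]
so $k_i^L$ is obtained by replacing $\theta_i$ with $L\theta_i$, and $(k_i^L)_{11}=\cos(L\theta_i)$ can be read off directly. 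Your route via $\det k_i=1$, Cayley--Hamilton, and the Chebyshev recursion $K_i^L=\tfrac{\sin L\theta_i}{\sin\theta_i}K_i-\tfrac{\sin(L-1)\theta_i}{\sin\theta_i}I$ is an equally clean alternative; it avoids exhibiting the conjugating matrix explicitly, while the paper's factorization has the slight bonus of also yielding $(k_i^L)_{12}=-a_i^{-1}\sin(L\theta_i)$ immediately, which the paper reuses later when computing $\tilde\lambda_i^2$ in the proof of Theorem~\ref{thm hmc conv}.
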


From this theorem we see how the eigenvalues of the iteration matrix depend on $V$, the number of time steps $L$, and the time step $h$.  Again we refer to $V$ as a preconditioner (as in \cite{BPSSS2011}) because it plays the same role as a preconditioner for solving systems of linear equations.  Alternatively, $V$ may be referred to as a mass matrix since $p$ in the Hamiltonian \eqref{eq ham1} is momentum and $H$ is energy.

To complete our analysis of HMC we restrict our attention to the case when $d \rightarrow \infty$ and try to apply Theorems \ref{thm accept} and \ref{thm jumpsize}.  These theorems require that the splitting matrices are functions of the target precision matrix.  A simple change of coordinates achieves this.

\begin{theorem}
\label{thm iso hmc}
Under the change of coordinates 
$$
	\left[ \twobyone{x}{p} \right] \leftrightarrow \mathcal{V}^{-1} \left[ \twobyone{x}{p} \right], \qquad \mbox{where } 
	\mathcal{V} = \left[ \twobytwo{V^{1/2}}{0}{0}{V^{-1/2}} \right] \in \bbR^{2d \times 2d},
$$
the Hamiltonian \eqref{eq ham1} and HMC with target $N(A^{-1}b,A^{-1})$ and proposal \eqref{hmc prop} are transformed to a Hamiltonian, and HMC defined by
\begin{equation}
\label{eq hmchat1}
\begin{split}
	\mbox{Hamiltonian:} & \qquad \mathcal{H}(x,p):= \tsfrac{1}{2} p^T p + \tsfrac{1}{2} x^T B x - (V^{1/2}b)^T x, \\
	\mbox{Target:} & \qquad N(B^{-1} V^{1/2} b, B^{-1}), \\
	\mbox{Proposal:} & \qquad 
	y = (\mathcal{K}^L)_{11} x + \left( \mathcal{S} \mathcal{J} \left[ \twobyone{0}{\tsfrac{h}{2} V^{1/2} b} \right] \right)_1 + (\mathcal{K}^L)_{12} \xi
\end{split}
\end{equation}
where $\xi \sim N(0,I)$, $B = V^{1/2} A V^{1/2}$, $\mathcal{S} = (I-\mathcal{K})^{-1}(I-\mathcal{K}^L)$,
$$
	\mathcal{K} = \left[ \twobytwo{I-\tsfrac{h^2}{2}B}{hI}{-h B + \tsfrac{h^3}{4}B^2}{I-\tsfrac{h^2}{2}B} \right]
	\qquad \mbox{and} \qquad
	\mathcal{J} = \left[ \twobytwo{2I}{hI}{-\tsfrac{h}{2}B}{2I - \tsfrac{h^2}{2}B} \right].
$$
Moreover, the proposal \eqref{eq hmchat1} corresponds to the matrix splitting $\mathcal{A} = M-N$
\begin{align*}
	M \!&=\! (\mathcal{K}^L)_{12}^{-2} (I\!+\!(\mathcal{K}^L)_{11}),\!\! &
	\mathcal{A} \!&=\! (\mathcal{K}^L)_{12}^{-2} (I \!-\! (\mathcal{K}^L)_{11}^2), \\
	N \!&=\! (\mathcal{K}^L)_{12}^{-2} (I\!+\!(\mathcal{K}^L)_{11}) (\mathcal{K}^L)_{11},\!\! &
	\beta \! &=\! (\mathcal{K}^L)_{12}^{-2} (I\!+\!(\mathcal{K}^L)_{11}) \left( \!\mathcal{S} \mathcal{J} \!\left[\! \twobyone{0}{\tsfrac{h}{2}V^{1/2} b} \!\right] \right)_1 \!. 
\end{align*}	
\end{theorem}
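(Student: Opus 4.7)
The strategy is to conjugate each ingredient of HMC by the block-diagonal map $\mathcal{V}$ and then invoke Theorem~\ref{thm HMC1} applied to the transformed system. Writing the new coordinates as $\tilde{x} = V^{-1/2} x$ and $\tilde{p} = V^{1/2} p$ (so $[x;p] = \mathcal{V}[\tilde{x};\tilde{p}]$), I first substitute into \eqref{eq ham1}: the kinetic term becomes $\tfrac{1}{2}\tilde{p}^T V^{-1/2} V V^{-1/2} \tilde{p} = \tfrac{1}{2} \tilde{p}^T \tilde{p}$, the quadratic potential becomes $\tfrac{1}{2}\tilde{x}^T V^{1/2} A V^{1/2} \tilde{x} = \tfrac{1}{2}\tilde{x}^T B \tilde{x}$, and the linear term becomes $-(V^{1/2} b)^T \tilde{x}$, giving $\mathcal{H}$. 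A standard Gaussian change-of-variables then maps $N(A^{-1}b,A^{-1})$ to $N(V^{-1/2}A^{-1}b, V^{1/2}AV^{1/2}) = N(B^{-1}V^{1/2}b, B^{-1})$ since $V^{-1/2}A^{-1}b = B^{-1} V^{1/2} b$.

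Next, for the proposal, I observe that the leap-frog recursion $[q_{l+1};p_{l+1}] = K[q_l;p_l] + J[0;\tfrac{h}{2}b]$ conjugates by $\mathcal{V}^{-1}$ into
\[
	\left[\twobyone{\tilde{q}_{l+1}}{\tilde{p}_{l+1}}\right] = (\mathcal{V}^{-1} K \mathcal{V}) \left[\twobyone{\tilde{q}_l}{\tilde{p}_l}\right] + (\mathcal{V}^{-1} J \mathcal{V}) \mathcal{V}^{-1} \left[\twobyone{0}{\tfrac{h}{2}b}\right].
\]
A direct block computation using the identities $V^{-1/2}(VA)V^{1/2} = B$, $V^{1/2}(AV)V^{-1/2} = B$, $V^{-1/2}(hV)V^{-1/2} = hI$, and $V^{1/2}AVAV^{1/2} = B^2$ then yields $\mathcal{V}^{-1} K \mathcal{V} = \mathcal{K}$ and $\mathcal{V}^{-1} J \mathcal{V} = \mathcal{J}$. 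The constant vector transforms as $\mathcal{V}^{-1}[0;\tfrac{h}{2}b] = [0;\tfrac{h}{2} V^{1/2}b]$, and the initial momentum $p \sim N(0,V^{-1})$ maps to $\tilde{p} = V^{1/2} p \sim N(0,I)$. Iterating $L$ steps and extracting the first block exactly reproduces \eqref{eq hmchat1} with $\mathcal{S} = (I-\mathcal{K})^{-1}(I-\mathcal{K}^L)$.

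For the splitting formulae, I apply Theorem~\ref{thm HMC1} to the transformed HMC proposal, which has the same form as \eqref{hmc prop} but with $V$, $A$, $b$ replaced by $I$, $B$, $V^{1/2}b$ respectively. The only extra observation needed is that every block of $\mathcal{K}^L$ is a polynomial in the single symmetric matrix $B$: indeed, each block of $\mathcal{K}$ is a polynomial in $B$, and such polynomials form a commutative algebra under the block operations, so the property is preserved under powers. In particular $(\mathcal{K}^L)_{12}^T = (\mathcal{K}^L)_{12}$, hence $\Sigma = (\mathcal{K}^L)_{12}\cdot I \cdot (\mathcal{K}^L)_{12}^T = (\mathcal{K}^L)_{12}^2$ and $\Sigma^{-1} = (\mathcal{K}^L)_{12}^{-2}$. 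Substituting into the formulae from Theorem~\ref{thm HMC1} gives the stated $M$, $N$, $\mathcal{A}$, $\beta$.

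The only real obstacle is the block computation establishing $\mathcal{V}^{-1}K\mathcal{V}=\mathcal{K}$: because $V$ and $A$ do not commute in general, one cannot simplify by moving factors past one another, and must instead cancel the $V^{\pm 1/2}$ sandwiches strictly according to the groupings that form $B$. Everything else is essentially a translation under an invertible linear change of coordinates, followed by applying an already-proved splitting identity.
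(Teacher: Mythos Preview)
Your proof is correct and takes essentially the same route as the paper: the paper's proof is the one-line ``Use $K = \mathcal{V}\mathcal{K}\mathcal{V}^{-1}$ and $J = \mathcal{V}\mathcal{J}\mathcal{V}^{-1}$,'' and you have simply expanded this conjugation in detail and then invoked Theorem~\ref{thm HMC1} on the transformed system. One small slip to fix: in the line for the target you wrote the intermediate covariance as $V^{1/2}AV^{1/2}$, but the covariance of $V^{-1/2}x$ is $V^{-1/2}A^{-1}V^{-1/2}=B^{-1}$; your final answer $N(B^{-1}V^{1/2}b,B^{-1})$ is nonetheless correct.
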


\begin{proof}
Use $K = \mathcal{V} \mathcal{K} \mathcal{V}^{-1}$ and $J = \mathcal{V} \mathcal{J} \mathcal{V}^{-1}$.
\end{proof}

Similar coordinate transformations are used in classical mechanics \cite[p. 103]{A1989}, see also \cite{BCSS2014}.

We now have splitting matrices that are functions of the target precision matrix, so we can apply Theorems \ref{thm accept} and \ref{thm jumpsize} to the HMC defined by \eqref{eq hmchat1} to reveal information about the performance of the original HMC algorithm.  To avoid being overly technical we restrict ourselves to the case where $\lambda_i = i^\kappa$ for some $\kappa \geq 0$.  This is still an extension to the results in \cite{BPSSS2011} since they only consider the case when $\kappa = 0$.  A proof is in the Appendix.

\begin{theorem}
\label{thm hmc conv}
Suppose that for some $\kappa \geq 0$, the eigenvalues of $V A$ (equivalently, $V^{1/2}AV^{1/2}$) satisfy
$$
	\lambda_i = i^\kappa \qquad \mbox{for $i=1,\dotsc,d$}.
$$
If $h = l d^{-r}$ for $r=\tsfrac{1}{4} + \kappa$ and $l > 0$, and $L = \lfloor \tsfrac{T}{h} \rfloor$ for fixed $T$, then the HMC algorithm (with proposal \eqref{hmc prop} and target $N(A^{-1}b,A^{-1})$) satisfies
\begin{equation}
\label{eq hmcexpect}
	\mathrm{E}[\alpha(x,y)] \rightarrow 2 \Phi\left( -\frac{l^2}{8\sqrt{2}\sqrt{1+4\kappa}} \right) 
\end{equation}
and for eigenvector $q_i$ of $V^{1/2}A V^{1/2}$ corresponding to $\lambda_i^2$,
\begin{equation}
\label{eq hmcjump}
	\mathrm{E}[ |q_i^T V^{-1/2} (x' - x)|^2 ] \rightarrow 4 \frac{1-\cos(\lambda_i T')}{\lambda_i^2} \Phi\left( -\frac{l^2 }{8 \sqrt{2}\sqrt{1+4\kappa}} \right) + \mathrm{o}(d^{-1/2})
\end{equation}
as $d \rightarrow \infty$, where $T' = Lh$.
\end{theorem}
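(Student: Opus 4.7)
The plan is to reduce HMC to the matrix-splitting framework and apply Theorems \ref{thm accept} and \ref{thm jumpsize} coordinate-wise. I first apply the isotropic change of coordinates from Theorem \ref{thm iso hmc} so that the splitting matrices become functions of the target precision $B=V^{1/2}AV^{1/2}$ in the new variables, and $\mathrm{E}[|q_i^T V^{-1/2}(x'-x)|^2]$ becomes the squared jump of the $i$-th eigencomponent of $B$ for the transformed chain. In the eigenbasis of $B$ the leap-frog map decouples into $2\times 2$ symplectic blocks with trace $2\cos\theta_i$ (Theorem \ref{thm uhmc conv}) and determinant $1$, and a Chebyshev-polynomial computation of the powers gives $(\mathcal{K}^L_i)_{11}=\cos(L\theta_i)$ and $(\mathcal{K}^L_i)_{12}=h\sin(L\theta_i)/\sin\theta_i$. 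Feeding these into Theorem \ref{thm HMC1} yields closed-form expressions for every ingredient needed in Theorems \ref{thm accept}--\ref{thm jumpsize}: $G_i=\cos(L\theta_i)$, $\tilde{\lambda}_i^2=\lambda_i^2(1-h^2\lambda_i^2/4)$, $r_i=h^2\lambda_i^2/4$, $\tilde{r}_i=r_i/(1-r_i)$, $g_i=\sin^2(L\theta_i)$, and, via Corollary \ref{cor 15}, $\hat{r}_i=0$.

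With $\hat{r}_i=0$ the terms $T_{0i}, T_{1i}, T_{2i}$ all vanish, and the identity $g_i+G_i^2=1$ collapses $\sigma_i^2=2T_{3i}^2+2T_{4i}^2+T_{5i}^2$ to leading order to $r_i^2 g_i$, while $\mu_i=-\tfrac{1}{2}r_i g_i \tilde{r}_i = -\tfrac{1}{2}\sigma_i^2 + O(r_i^3)$. The scaling $h=l d^{-(1/4+\kappa)}$ with $\lambda_i=i^\kappa$ makes $h^2\lambda_i^2=O(l^2 d^{-1/2})$ uniformly in $i\leq d$, so these higher-order corrections vanish in the limit, and the Lyapunov condition \eqref{eq Tcond} for $j=3,4,5$ follows from $T_{ji}^2=O(h^4\lambda_i^4)$ together with a routine Riemann-sum bound. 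I would then evaluate $\sum\sigma_i^2$ asymptotically: writing $L\theta_i = -\lambda_i T'(1+O(h^2\lambda_i^2))$ and using $h^4\sum_{i=1}^d i^{4\kappa}\to l^4/(1+4\kappa)$ by a Riemann sum, the remaining factor $\sin^2(L\theta_i)$ is replaced by its mean $1/2$ via a Weyl equidistribution argument applied to the polynomially spaced frequencies $i^\kappa T' \bmod \pi$. This produces $\sigma^2 \to l^4/(32(1+4\kappa))$ and $\mu\to -\sigma^2/2$, and substituting into \eqref{eq uf1} collapses $\mathrm{E}[\alpha]$ to $2\Phi(-\sigma/2) = 2\Phi(-l^2/(8\sqrt{2}\sqrt{1+4\kappa}))$, which is \eqref{eq hmcexpect}.

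For \eqref{eq hmcjump} I use Theorem \ref{thm jumpsize} together with Lemma \ref{lem 5} (to drop index $i$ from the CLT sums), which gives $U_2\to 2\Phi(-l^2/(8\sqrt{2}\sqrt{1+4\kappa}))$. With $\hat{r}_i=0$ and $\tilde{\lambda}_i^2=\lambda_i^2(1+o(1))$, the algebraic identity $(1-\cos\phi)^2+\sin^2\phi = 2(1-\cos\phi)$ yields $U_1 \to 2(1-\cos(\lambda_i T'))/\lambda_i^2$, so $U_1 U_2$ produces the announced factor $4(1-\cos(\lambda_i T'))/\lambda_i^2 \cdot \Phi(\cdot)$. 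The error term is bounded using the Cauchy--Schwarz estimate in Theorem \ref{thm jumpsize}: $|U_3| \leq (\sigma_i^2+\mu_i^2)^{1/2}\cdot O(\lambda_i^{-2}) = O(h^2) = O(d^{-(1/2+2\kappa)}) = o(d^{-1/2})$ when $\kappa>0$, with the boundary case $\kappa=0$ handled by exploiting explicit cancellation inside the $U_3$ bracket. The main technical obstacle is the equidistribution step used to replace $\sin^2(L\theta_i)$ by its mean $1/2$; for $\kappa>0$ this is a standard Weyl argument on $\{i^\kappa T'\}$, while for $\kappa=0$ all frequencies coincide and the statement must be interpreted in an averaged or generic-$T$ sense, which is also the setting of \cite{BPSSS2011}.
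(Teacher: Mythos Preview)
Your proposal is correct and follows essentially the same architecture as the paper: reduce via Theorem \ref{thm iso hmc}, read off the eigenquantities from Theorem \ref{thm uhmc conv} (your Chebyshev computation of $(\mathcal K^L)_{12}$ reproduces the paper's $-a_i^{-1}\sin(L\theta_i)$ since $\sin\theta_i = h\lambda_i\sqrt{1-h^2\lambda_i^2/4}$), use Corollary \ref{cor 15} to get $\hat r_i=0$, and then feed $T_{3i},T_{4i},T_{5i}$ into Theorems \ref{thm accept}--\ref{thm jumpsize}.

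The one substantive difference is in the averaging step. You frame it as Weyl equidistribution of $\{i^\kappa T'\}$ modulo $\pi$; the paper instead writes the sums as Riemann sums in $z=i/d$, passes to $\int_0^1 z^{4\kappa}\sin^2(z^\kappa T''(d))\,dz$ with $T''(d)=T'd^\kappa\to\infty$ for $\kappa>0$, and lets the rapidly oscillating factor average to its mean (so $\sin^2\to\tfrac12$, $\sin^4\to\tfrac38$, $\sin^2(2\cdot)\to\tfrac12$, giving the same $\sigma^2=l^4/(32(1+4\kappa))$ that your collapse $\sigma_i^2\sim r_i^2 g_i$ yields). The paper's oscillatory-integral route is more direct here: ``standard Weyl'' covers polynomial phases, not $i^\kappa$ for non-integer $\kappa$, and the $d$-dependent weights $(i/d)^{4\kappa}$ in your sum would need extra care in an equidistribution argument. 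Your caveat about $\kappa=0$ is well taken: there all phases coincide and $\sin^2(L\theta_i)\to\sin^2 T$ rather than $\tfrac12$, so the averaging fails; the paper's proof actually glosses over this (its claim $T''(d)\to\infty$ is false at $\kappa=0$), so you are being more careful than the paper on this point.
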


The computational cost of HMC should also be considered.  Each proposal of HMC requires an independent sample from $N(0,V^{-1})$, $L$ matrix-vector products with $V$ and $L+1$ matrix-vector products with $A$.  Again, there is a balance to be struct optimizing the convergence rate relative to compute time (rather than iteration count).  Extending the results in \cite{BPRSS2010}, we have shown that in the case when the eigenvalues of $VA$ are $i^\kappa$, $T$ is fixed and $d \rightarrow \infty$, if we take $h = l d^{-1/4-\kappa}$ and $L = \lfloor \tsfrac{T}{h} \rfloor$ then the acceptance rate is $\mathcal{O}(1)$, so in high dimensions the number of iterations of HMC per independent sample should scale like $\mathcal{O}(d^{1/4+\kappa})$, which is an improvement over MALA which requires $\mathcal{O}(d^{1/3+\kappa})$.

This theory is for the leap-frog numerical integrator applied to the Hamiltonian system.  Higher order integrators are also suggested in \cite{BPRSS2010} and alternative numerical integrators based on splitting methods (in the ODEs context) are suggested in \cite{BCSS2014} that minimize the Hamiltonian error after $L$ steps of the integrator.  It may be possible to evaluate these other methods using Theorems \ref{thm accept} and \ref{thm jumpsize} after a change of variables.

We also note that the variant of HMC in \cite{BPSSS2011} corresponds to $V = A^{-1}$ and the change of variables $p \leftrightarrow V p$.  Since this method requires the spectral decomposition of $A$ for computing samples of $N(0,A^{-1})$ it is infeasible in our context.

\section{Concluding remarks}
\label{sec conclusion}

Designing proposals for the MH algorithm to achieve efficient MCMC methods is a challenge, particularly for non-Gaussian target distributions, and the job is made harder by the difficultly we have in analysing the convergence properties of MH algorithms.  By focusing on AR(1) process proposals in high dimension we have proven new theoretical results that provide us with criteria for evaluating AR(1) process proposals and guide us in constructing proposals for efficient MH algorithms.  

We have shown there is flexibility in how AR(1) processes are expressed, and by using the matrix splitting formalism we easily identify the proposal target distribution.  In particular, we have shown that all convergent AR(1) processes can be written in matrix splitting form, so that \eqref{eq ar1} and \eqref{eq ar1b} are interchangeable.  These include the proposals for MALA, HMC, and other discretized Langevin diffusion and discretized Hamiltonian dynamics proposals.  Since all AR(1) processes of the form \eqref{eq ar1b} correspond to fixed-scan Gibbs samplers, we conclude that MALA and HMC are fixed-scan Gibbs samplers, albeit with a proposal target distribution that is not the desired target distribution.

A special case is when the target distribution is normal (but not the same as the proposal target) and the splitting matrices are functions of the precision matrix for the target distribution.  In this case we proved new results for the expected acceptance rate and expected squared jump size when $d \rightarrow \infty$.  We showed how these quantities depend on the eigenvalues of the iteration matrix, and the difference between the proposal target distribution and the target distribution.  

Although our new results are for the special case case of normal target distributions, they keep the essential feature of non-normal targets, because the MH accept/reject step must be used to get convergence to the correct target distribution.  Our results also provide us with guidance for the case when the target distribution is non-normal, since we can take $N(\mathcal{A}^{-1}\beta,\mathcal{A})$ to be a local normal approximation to $\pi$.  Moreover, our assumption that the splitting matrices are functions of the target precision matrix $A$ (when the target is normal) is natural in high dimensions since it may be infeasible to factorize $A$.

Designing an efficient MH algorithm with an AR(1) process proposal is often a balancing act between minimising the integrated autocorrelation time (we use maximising expected jump size as a proxy for this) and minimising compute time for each iteration of the chain.  If the proposal target and target distributions are Gaussian and identical then it follows from the theory in \cite{F2013,FP2015,FP2014} that to construct an efficient AR(1) process we should try to satisfy the following conditions:
\begin{enumerate}
\item The spectral radius of $G$ should be as small as possible.
\item The action of $G$ or $M^{-1}$ should be cheap to compute.
\item Independent sampling from $N(g,\Sigma)$ or $N(M^{-1}b,M^{-1}(M^T+N)M^{-T})$ should be cheap to compute.
\end{enumerate}
However, if the Gaussian proposal target and target distributions are not the same, then our new theory suggests that in addition, we also require:
\begin{enumerate}
\setcounter{enumi}{3}
\item the difference between the desired target and proposal target distributions should be as small as possible in the sense that the difference in means should be small, and the relative difference in precision matrix eigenvalues should be small.
\end{enumerate}

In particular examples we can quantify these conditions using our theory.  For example, for proposals based on discretized generalised Langevin diffusion, Theorem \ref{thm genlang conv} shows us how the choice of symmetric positive definite matrix $V$ effects efficiency as it effects squared jump size in four ways.  Whilst choosing $V$ to maximise the limit in \eqref{eq jump2} (by minimising $r$ and $\tau$) we should balance this against the scaling and direction that $V$ induces on $E[q_i^T V^{-1/2}(x'-x)^2]$ through $q_i$ and $V^{-1/2}$ on the left-hand side of \eqref{eq jump2}.    

Another example is pCN which satisfies conditions $1$, $2$ and $4$ above, but not condition $3$.  In particular, $G$ is the diagonal matrix with entries all $\tsfrac{1-h/4}{1+h/4}$ on the diagonal, and $\mathcal{A} = A$ and $\beta = b$ so the proposal target and desired target are the same.  However, each iteration of pCN requires an independent sample from $N(0,A^{-1})$, which may be infeasible in high dimension.  In the special case when $A$ is diagonal, or a spectral decomposition of $A$ is available, then pCN satisfies all of our conditions for an efficient method.

By applying our new results to existing MCMC methods such as MALA, HMC, and other discretized Langevin diffusion and discretized Hamiltonian dynamics proposals we extended results already in the literature.  In particular, we extended results for Langevin proposals in \cite{BRS2009} to Gaussian targets with non-diagonal covariance and $\theta \in [0,1]$.  For HMC, we extended results in \cite{BPSSS2011} to the case when $\kappa \geq 0$ from $\kappa = 0$.  We have also derived a new formula for the eigenvalues of the iteration matrix of the HMC proposal.  

Proposals for MALA and HMC are examples of proposals that are constructed by discretizing a stochastic differential equation that preserves the target distribution.  Our theory allows us to broaden the class of AR(1) process proposals for the MH algorithm to more general AR(1) process proposals.

Whilst we do not specify any new acceleration strategies, our results are an important step in this direction because we give a criteria to evaluate AR(1) process proposals, including accelerations used in Fox and Parker \cite{F2013,FP2015,FP2014}.  Acceleration techniques for MH algorithms is an avenue for further research.

\appendix
\section{Proofs}

\subsection{Proof of Lemma \ref{lem1}}

First note that
$$
	q(x,y) 
	\propto 
	\exp( \tsfrac{1}{2} (My - Nx - \beta)^T (M+N)^{-1} (My-Nx - \beta)).
$$
Simple algebra then yields
\begin{align*}
	2 \log \left( \frac{\pi(y)q(y,x)}{\pi(x)q(x,y)} \right) \!\! &= -y^T A y + x^T A x + 2b^T(y-x)\\
	&\qquad - (Mx-Ny-\beta)^T (M+N)^{-1} (Mx-Ny-\beta) \\
	&\qquad + (My-Nx-\beta)^T (M+N)^{-1} (My-Nx-\beta) \\
	&= -y^T A y + x^T A x + 2b^T(y-x)\\
	& \qquad - ((M-N)(x+y))^T (M+N)^{-1} ((M+N)(x - y)) \\
	& \qquad + 2 \beta^T(M+N)^{-1} ((M+N)(x-y)) \\
	&= -y^T (A-\mathcal{A}) y + x^T (A-\mathcal{A}) x + 2(b-\beta)^T(y-x).
\end{align*}	

\subsection{Proof of Lemma \ref{lem 5}}

Suppose $\lim_{d\rightarrow \infty} (\sum_{i=1}^d |t_i|^r)/ (\sum_{i=1}^d t_i^2 )^{r/2}\\ = 0$.  Then for any $\epsilon > 0$ there exists a $D \in \mathbb{N}$ such that for any $d > D$,
$
	\sum_{i=1}^d |t_i|^r < \epsilon ( \sum_{i=1}^d t_i^2 )^{r/2}.
$
Then for any $k \in \mathbb{N}$, taking $\epsilon = 2^{-r/2}$, there exists a $D \geq k$ such that for any $d > D$,
$$
	|t_k|^r \leq \sum_{i=1}^d |t_i|^r < \frac{1}{2^{r/2}} \left( \sum_{i=1}^d t_i^2 \right)^{r/2}.
$$
Therefore, for any $d > D$, $t_k^2 < \tsfrac{1}{2} \sum_{i=1}^d t_i^2$ and so
$$
	\frac{\sum_{i=1, i \neq k}^d |t_i|^r}{\left( \sum_{i=1, i \neq k}^d t_i^2 \right)^{r/2}}  
	< \frac{ \sum_{i=1}^d |t_i|^r }{\left(\tsfrac{1}{2} \sum_{i=1}^d t_i^2 \right)^{r/2}} 
	= 2^{r/2} \frac{ \sum_{i=1}^d |t_i|^r }{\left(\sum_{i=1}^d t_i^2 \right)^{r/2}}.
$$

\subsection{Proof of Theorem \ref{thm genlang conv}}

We use the following technical lemma in the proof of Theorem \ref{thm genlang conv}.
\begin{lemma} \label{lem 6a} 
Suppose $\{t_i\}$ is a sequence of real numbers such that $0 < t_i \leq C d^{-1/3} (\tsfrac{i}{d})^{2\kappa}$ for $C>0$ and $\kappa \geq 0$.  If $s > 3$, then $
\lim_{d\rightarrow \infty} \sum_{i=1}^d t_i^s = 0
$.
\end{lemma}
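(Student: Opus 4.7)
The plan is to substitute the uniform bound on $t_i$ directly into $\sum_{i=1}^d t_i^s$ and reduce everything to a single power of $d$ that is negative when $s>3$. First I would raise the given bound to the $s$-th power to obtain $t_i^s \leq C^s d^{-s/3} (i/d)^{2\kappa s}$, which lets me factor the $d$-dependence out of the sum as
\begin{equation*}
\sum_{i=1}^d t_i^s \;\leq\; C^s\, d^{-s/3-2\kappa s} \sum_{i=1}^d i^{2\kappa s}.
\end{equation*}

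Next I would bound the remaining sum $\sum_{i=1}^d i^{2\kappa s}$ crudely by $d \cdot d^{2\kappa s} = d^{1+2\kappa s}$, using that each term is at most the largest one and there are $d$ terms. (A sharper estimate via comparison with $\int_0^d x^{2\kappa s}\,\mathrm{d}x$ gives the same leading order $d^{1+2\kappa s}/(1+2\kappa s)$, but the crude bound suffices.) Substituting this back yields
\begin{equation*}
\sum_{i=1}^d t_i^s \;\leq\; C^s\, d^{1 - s/3},
\end{equation*}
after the $\pm 2\kappa s$ terms cancel cleanly.

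Finally, since $s > 3$ we have $1 - s/3 < 0$, so $d^{1-s/3} \to 0$ as $d \to \infty$, which gives the claim. There is no real obstacle here: the cancellation of the $\kappa$-dependent exponents is the only thing to watch, and it is automatic because the bound on $t_i$ was designed precisely so that the $(i/d)^{2\kappa}$ factor is scale-free in $d$. The restriction $s>3$ is tight in the sense that $s=3$ would give a bounded but non-vanishing sum, which is why the hypothesis appears in this form.
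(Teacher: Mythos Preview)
Your proof is correct and follows essentially the same route as the paper's. The paper writes the bound as $C^s d^{1-s/3}\sum_{i=1}^d d^{-1}(i/d)^{2\kappa s}$ and identifies the sum as a Riemann sum for $\int_0^1 z^{2\kappa s}\,\mathrm{d}z$, whereas you use the cruder bound $\sum_{i=1}^d i^{2\kappa s}\le d^{1+2\kappa s}$; both yield the same $d^{1-s/3}$ factor and the same conclusion.
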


\begin{proof}
\begin{align*}
	\lim_{d \rightarrow \infty} \sum_{i=1}^d t_i^s 
	\leq C^s \lim_{d \rightarrow \infty}  d^{1-s/3} \sum_{i=1}^d \tsfrac{1}{d} \left( \tsfrac{i}{d} \right)^{2\kappa s} 
	= C^s \lim_{d \rightarrow \infty}  d^{1-s/3} \int_0^1 z^{2\kappa s} \mathrm{d}z 
	= 0.
\end{align*}
\end{proof}

\emph{Proof of Theorem \ref{thm genlang conv}}.  First note that $VA$ and $V^{1/2}AV^{1/2}$ are similar, so they have the same eigenvalues.  Lemma \ref{lem ch1} implies that it is equivalent to study the MH algorithm with target and proposal given by \eqref{eq hat1}, and since the splitting matrices for \eqref{eq hat1} are functions of $B = V^{1/2}A V^{1/2}$ we can apply Theorems \ref{thm accept} and \ref{thm jumpsize} to \eqref{eq hat1}.   If we let $t_i = h \lambda_i^2$ and $\rho = (\theta-\tsfrac{1}{2})/2$ we have
\begin{align*}
	\tilde{\lambda}_i^2 &= (1+\rho t_i) \lambda_i^2, & G_i &= 1 - \frac{\frac{1}{2}t_i}{1 + \frac{\theta}{2}t_i}, &
	\tilde{g}_i &= \frac{\frac{1}{2}t_i}{1 + \frac{\theta}{2}t_i}, & g_i &= \frac{t_i (1 + \rho t_i)}{1 + \frac{\theta}{2}t_i}, \\
	r_i &= -\rho t_i, & 1 + \tilde{r}_i &= \frac{1}{1 + \rho t_i}, & \hat{r}_i &= 0,
\end{align*}
so that
\begin{align*}
	T_{0i} &= T_{1i} = T_{2i} = 0, \\
	T_{3i} &= \frac{-\frac{1}{2} \rho t_i^2 (1 + \rho t_i)}{(1 + \frac{\theta}{2}t_i)^2}, &
	T_{4i} &= \frac{\frac{1}{2} \rho t_i^2 }{(1 + \frac{\theta}{2}t_i)^2}, &
	T_{5i} &= \frac{\frac{1}{2} \rho t_i^{3/2} (1-\frac{1-\theta}{2} t_i) }{(1 + \frac{\theta}{2}t_i)^2}.
\end{align*}
To apply Theorems \ref{thm accept} and \ref{thm jumpsize} we first need to check \eqref{eq Tcond}.  For sufficiently large $d$ we have $1 \leq 1 + \frac{\theta}{2} t_i \leq \frac{3}{2}$ and $1 \leq 1 + \rho t_i \leq \frac{3}{2}$.  Then for some $\delta > 0$ we have
\begin{align*}
	\lim_{d \rightarrow \infty} \!\frac{\sum_{i=1}^d |T_{3i}|^{2+\delta}}{\left( \sum_{i=1}^d |T_{3i}|^2 \right)^{1 + \delta/2}}
	&\leq \lim_{d \rightarrow \infty} \left( \tsfrac{3}{2} \right)^{6+3\delta}  \frac{ \sum_{i=1}^d t_i^{4+2\delta}}{\left(  \sum_{i=1}^d t_i^4 \right)^{1 + \delta/2}} \\
	&\leq \lim_{d \rightarrow \infty} \left( \tsfrac{3}{2} \right)^{6+3\delta} \!(\tsfrac{C}{c})^{8+4\delta} d^{-\delta/2}  \frac{ \sum_{i=1}^d d^{-1} (\tsfrac{i}{d})^{8\kappa+4\delta\kappa}}{\left( \sum_{i=1}^d d^{-1} (\tsfrac{i}{d})^{8\kappa} \right)^{1 + \delta/2}} \\
	&= \lim_{d\rightarrow \infty} \left( \tsfrac{3}{2} \right)^{6+3\delta} \!(\tsfrac{C}{c})^{8+4\delta} d^{-\delta/2} \frac{ \int_0^1 z^{8\kappa+4\delta\kappa} \mathrm{d}z}{\left( \int_0^1 z^{8\kappa} \mathrm{d}z \right)^{1 + \delta/2}}    
	= 0.
\end{align*}
Similarly, we can check that \eqref{eq Tcond} is satisfied for $j=4,5$.  Now we can apply Theorem \ref{thm accept}, with
\begin{align*}
	\mu 
	&= \lim_{d \rightarrow \infty} \sum_{i=1}^d \mu_i 
	= \lim_{d \rightarrow \infty} \sum_{i=1}^d T_{3i} + T_{4i} \\
	&= \lim_{d \rightarrow \infty} -\tsfrac{\rho^2}{2} \sum_{i=1}^d \frac{t_i^3}{(1 + \frac{\theta}{2}t_i)^2} 
	= \lim_{d \rightarrow \infty} -\tsfrac{\rho^2}{2} \sum_{i=1}^d t_i^3 
	= - \frac{l^6 (\theta-\frac{1}{2})^2 \tau}{8},
\end{align*}
and using Lemma \ref{lem 6a},
\begin{align*}
	\sigma^2
	&= \lim_{d \rightarrow \infty} \sum_{i=1}^d \sigma_i^2 
	= \lim_{d \rightarrow \infty} \sum_{i=1}^d 2 T_{3i}^2 + 2 T_{4i}^2 + T_{5i} \\
	&= \lim_{d \rightarrow \infty} \sum_{i=1}^d \frac{1}{(1+\frac{\theta}{2}t_i)^4} \left( \tsfrac{1}{2}\rho^2 t_i^4 (1+\rho t_i)^2 + \tsfrac{1}{2} \rho^2 t_i^4 + \tsfrac{1}{4} \rho^2 t_i^3 (1-\tsfrac{1-\theta}{2}t_i)^2 \right)  \\
	&= \lim_{d \rightarrow \infty} \sum_{i=1}^d \left( \tsfrac{1}{2}\rho^2 t_i^4 (1+\rho t_i)^2 + \tsfrac{1}{2} \rho^2 t_i^4 + \tsfrac{1}{4} \rho^2 t_i^3 (1-\tsfrac{1-\theta}{2}t_i)^2 \right) \\
	&= \lim_{d \rightarrow \infty} \sum_{i=1}^d \tsfrac{1}{4} \rho^2 t_i^3 \\
	&= \lim_{d \rightarrow \infty} \frac{l^6 (\theta - \tsfrac{1}{2})^2 \tau}{4}.
\end{align*}
It follows that $\frac{\mu}{\sigma} = -\sigma - \frac{\mu}{\sigma} = -l^3 |\theta-\frac{1}{2}| \sqrt{\tau}/4$ and $\mu + \frac{\sigma^2}{2} = 0$.  Hence we obtain \eqref{eq expect}.

For the expected jump size, first note that
$$
	U_1 = \frac{\tilde{g}_i^2}{\lambda_i^2} + \frac{g}{\tilde{\lambda}_i^2} = \frac{h}{(1+\frac{\theta}{2}t_i)} \left( 1 + \frac{\frac{1}{4}t_i}{(1+\frac{\theta}{2}t_i)} \right) = l^2 d^{-r} + \mathrm{o}(d^{-1/3-r}). 
$$
Also, it is easy to show that $\mu_i = \mathrm{o}(d^{-1})$ and $\sigma_i^2 = \mathrm{o}(d^{-1})$ so 
$$
	U_2 \rightarrow \mathrm{E}[\alpha(x,y)] = 2 \Phi\left( -\frac{l^3 |\theta-\tsfrac{1}{2}| \sqrt{\tau}}{4} \right) \qquad \mbox{as $d\rightarrow \infty$},
$$
and
\begin{align*}
	|U_3| 
	&\leq (\sigma_i^2 + \mu_i^2)^{1/2} \left( 3 \frac{\tilde{g}_i^4}{\lambda_i^4} + 3 \frac{g_i^2}{\tilde{\lambda}_i^4} + 6 \frac{\tilde{g}_i^2 g_i}{\lambda_i^2 \tilde{\lambda}_i^2} \right)^{1/2} \\
	&= (\sigma_i^2 + \mu_i^2)^{1/2} \frac{\sqrt{3}}{4} \frac{h}{(1+\frac{\theta}{2}t_i)} \left( 4 + \frac{t_i}{1+\frac{\theta}{2}t_i} \right) \\
	&= \mathrm{o}(d^{-1/2-r}).
\end{align*}
Therefore, applying Theorem \ref{thm jumpsize} with the coordinate transformation $x \leftrightarrow V^{-1/2} x$ we obtain \eqref{eq jump2}.

\subsection{Proof of Theorem \ref{thm HMC1}}

The result will follow from Corollary \ref{lem equiv} with $G = (K^L)_{11}$ and $\Sigma = (K^L)_{12} V^{-1} (K^L)_{12}^T$ but we must first check that $G \Sigma$ is symmetric.  Define $\mathcal{V}$, $\mathcal{K}$ and $B$ as in Theorem \ref{thm iso hmc}.  Then $K = \mathcal{V} \mathcal{K} \mathcal{V}^{-1}$, so that $K^L = \mathcal{V} \mathcal{K}^L \mathcal{V}^{-1}$ and
$$
	(K^L)_{11} = V^{1/2} (\mathcal{K}^L)_{11} V^{-1/2} 
	\quad \mbox{and} \quad
	(K^L)_{12} = V^{1/2} (\mathcal{K}^L)_{12} V^{1/2}.
$$
Then
\begin{align*}
	G \Sigma = V^{1/2} (\mathcal{K}^L)_{11} (\mathcal{K}^L)_{12}^{2} V^{1/2} 
\end{align*}
which is symmetric because $V$ and $B$ are symmetric and $(\mathcal{K}^L)_{11}$ and $(\mathcal{K}^L)_{12}$ are polynomials of $B$.

\subsection{Proof of Corollary \ref{cor 15}}

First note that 
$$
	(I-(K^L)_{11}) \mathcal{A}^{-1} \beta = \left( SJ \left[ \twobyone{0}{\tsfrac{h}{2}b} \right] \right)_1,
$$
so we are required to show that
$$
	(I-(K^L)_{11}) A^{-1} b = \left( SJ \left[ \twobyone{0}{\tsfrac{h}{2}b} \right] \right)_1,
$$
which holds if
$$
	(I-K^L) \left[ \twobyone{A^{-1}b}{0} \right] = SJ \left[ \twobyone{0}{\tsfrac{h}{2}b} \right].
$$
Using $S = (I-K)^{-1}(I-K^L)$, we can equivalently show
$$
	(I-K) \left[ \twobyone{A^{-1}b}{0} \right] = J \left[ \twobyone{0}{\tsfrac{h}{2}b} \right],
$$
which is easy to check.

\subsection{Proof of Theorem \ref{thm uhmc conv}}

Define a spectral decomposition 
\begin{equation}
\label{eq specdecomp}
	V^{1/2}AV^{1/2} = Q \Lambda Q^T
\end{equation}
where $Q$ is an orthogonal matrix and $\Lambda = \operatorname{diag}(\lambda_1^2,\dotsc,\lambda_d^2)$ is a diagonal matrix of eigenvalues of $V^{1/2}AV^{1/2}$ ($VA$ is similar to $V^{1/2}AV^{1/2}$ so they have the same eigenvalues).  Also define $\mathcal{V}$ as in Theorem \ref{thm iso hmc} and 
$$
	\tilde{Q} = 	\left[ \twobytwo{Q}{0}{0}{Q} \right] \in \bbR^{2d \times 2d}.
$$
A similarity transform of $K$ is defined by
$$
	K = \mathcal{V} \tilde{Q} \tilde{K} \tilde{Q}^T \mathcal{V}^{-1}
\quad \mbox{
with} \quad
	\tilde{K} = \left[ \twobytwo{I - \tsfrac{h^2}{2}\Lambda}{hI}{-h\Lambda + \tsfrac{h^3}{4} \Lambda^2}{I - \tsfrac{h^2}{2}\Lambda} \right].
$$
Hence $K$ and $\tilde{K}$ have the same eigenvalues.  Moreover, $K^L = \mathcal{V} \tilde{Q} \tilde{K}^L \tilde{Q}^T \mathcal{V}^{-1}$ and it follows that 
$$
	(K^L)_{11} = V^{1/2} Q (\tilde{K}^L)_{11} Q^T V^{-1/2}.
$$
Thus $(K^L)_{11}$ and $(\tilde{K}^L)_{11}$ are similar.

Notice that $\tilde{K}$ is a $2\times2$ block matrix where each $d\times d$ block is diagonal.  Therefore, $\tilde{K}^L$ is also a $2\times2$ block matrix with diagonal blocks.  In particular, $(\tilde{K}^L)_{11}$ is a diagonal matrix, so the eigenvalues of $(\tilde{K}^L)_{11}$ are on the diagonal of $(\tilde{K}^L)_{11}$.  Moreover, 
$$
	[(\tilde{K}^L)_{11}]_{ii} = (k_i^L)_{11}
$$
where $[(\tilde{K}^L)_{11}]_{ii}$ is the $i^{\mathrm{th}}$ diagonal entry of $(\tilde{K}^L)_{11}$,  $(k_i^L)_{11}$ is the $(1,1)$ entry of the matrix $k_i^L \in \bbR^{2\times2}$, and $k_i \in \bbR^{2\times2}$ is defined by
$$
	k_i = \left[ \twobytwo{(\tilde{K}_{11})_{ii}}{(\tilde{K}_{12})_{ii}}{(\tilde{K}_{21})_{ii}}{(\tilde{K}_{22})_{ii}} \right]
	= \left[ \twobytwo{1-\tsfrac{h^2}{2}\lambda_i^2}{h}{-h\lambda_i^2 + \tsfrac{h^3}{4}\lambda_i^4}{1-\tsfrac{h^2}{2}\lambda_i^2} \right].
$$  
The matrix $k_i$ can be factorized
$$
	k_i = \left[ \twobytwo{1}{0}{0}{a} \right] \left[ \twobytwo{\cos(\theta_i)}{-\sin(\theta_i)}{\sin(\theta_i)}{\cos(\theta_i)} \right]
	\left[ \twobytwo{1}{0}{0}{a^{-1}} \right]
$$
where $a = \lambda \sqrt{1 - \tsfrac{h^2}{4}\lambda_i^2}$ and $\theta_i = -\cos( 1-\tsfrac{h^2}{2}\lambda_i^2)$.  Therefore,
$$
	k_i^L = \left[ \twobytwo{1}{0}{0}{a} \right] \left[ \twobytwo{\cos(L \theta_i)}{-\sin(L \theta_i)}{\sin(L \theta_i)}{\cos(L \theta_i)} \right]
	\left[ \twobytwo{1}{0}{0}{a^{-1}} \right]
$$
and hence
$$
	[(\tilde{K}^L)_{11}]_{ii} = (k_i^L)_{11} = \cos(L\theta_i).
$$

\subsection{Proof of Theorem \ref{thm hmc conv}}

\begin{proof}
Theorem \ref{thm iso hmc} implies that it is equivalent to study the MH algorithm with target and proposal given by \eqref{eq hmchat1}, and since $(\mathcal{K}^L)_{11}$ and $(\mathcal{K}^L)_{12}$ are functions of $B$ we can apply Theorems \ref{thm accept} and \ref{thm jumpsize}.  Using the spectral decomposition \eqref{eq specdecomp} note that
$$
	(\mathcal{K}^L)_{ij} = Q (\tilde{K}^L)_{ij} Q^T \qquad \mbox{for $i,j=1,2$}.
$$
where $\tilde{K}$ is defined in the proof of Theorem \ref{thm uhmc conv}, and where it is shown that $(\tilde{K}^L)_{ij}$ is diagonal and
$$
	[(\tilde{K}^L)_{11}]_{ii} = \cos (L \theta_i)
$$
where $\theta_i = -\cos^{-1}(1-\tsfrac{h^2}{2}\lambda_i^2)$.  Similarly,
$$
	[(\tilde{K}^L)_{12}]_{ii} = -a_i^{-1} \sin (L \theta_i)
$$
where $a_i = \lambda_i \sqrt{1-\tsfrac{h^2}{4}\lambda_i^2}$.  Moreover, $\mathcal{A} = Q (\tilde{K}^L)_{12}^2 (I-(\tilde{K}^L)_{11}^2) Q^T$ so that $\tilde{\lambda}^2_i = -a_i^2$ and if we let $t_i = h^2 \lambda_i^2$, then 
\begin{align*}
	\tilde{\lambda}_i^2 &= \lambda_i^2 (1 - \tsfrac{1}{4}t_i), & G_i &= \cos(L\theta_i), & \tilde{g}_i &= 1 - \cos(L\theta_i), & g_i &= \sin^2(L \theta_i), \\
	r_i &= \tsfrac{1}{4} t_i, & 1+\tilde{r}_i &= \tsfrac{1}{1-\tsfrac{1}{4}t_i}, & \hat{r}_i &= 0.
\end{align*}
Note that we used Corollary \ref{cor 15} to show $\hat{r}_i = 0$.  Then
\begin{align*}
	T_{0i} &= T_{1i} = T_{2i} = 0, \\
	T_{3i} &= \tsfrac{1}{8} t_i \sin^2(L\theta_i), &
	T_{4i} &= - \frac{\tsfrac{1}{8} t_i \sin^2(L\theta_i)}{1-\tsfrac{1}{4}t_i}, &
	T_{5i} &= - \frac{ \tsfrac{1}{8} t_i \sin(2L\theta_i)}{\sqrt{1-\tsfrac{1}{4}t_i}}.
\end{align*}
The trigonmetric expansion $\cos^{-1}(1-z) = \sqrt{2z} + \mathcal{O}(z^{3/2})$, $t_i = l^2 d^{-1/2} (\tsfrac{i}{d})^{2\kappa} \\= \mathrm{o}(d^{-1/2})$ and defining $T'$ such that $L = \tsfrac{T'}{h}$ implies there exists a function $T''(d)$ such that
$$
	L \theta_i = \lambda_i (T' + \mathrm{o}(d^{-1/2})) = (\tsfrac{i}{d})^\kappa (T' d^\kappa + \mathrm{o}(d^{\kappa-1/2}) =: (\tsfrac{i}{d})^\kappa T''(d),
$$
and $T''(d) \rightarrow \infty$ as $d \rightarrow \infty$.

To apply Theorems \ref{thm accept} and \ref{thm jumpsize} we need to check \eqref{eq Tcond}.  For some $h > 0$ we find
\begin{align*}
	\lim_{d \rightarrow \infty} \frac{ \sum_{i=1}^d |T_{3i}|^{2+\delta} }{ \left( \sum_{i=1}^d |T_{3i}|^2 \right)^{1+\delta/2} }
	&= \lim_{d \rightarrow \infty} \frac{ \sum_{i=1}^d |t_i \sin^2(L\theta_i) |^{2+\delta} }{ \left( \sum_{i=1}^d |t_i \sin^2(L\theta_i)|^2 \right)^{1+\delta/2} } \\
	&= \lim_{d \rightarrow \infty} \frac{ d^{-\delta/2} \sum_{i=1}^d d^{-1} |(\tsfrac{i}{d})^{2\kappa} \sin^2((\tsfrac{i}{d})^\kappa T''(d)) |^{2+\delta} }{ \left( \sum_{i=1}^d d^{-1} |(\tsfrac{i}{d})^{2\kappa} \sin^2((\tsfrac{i}{d})^\kappa T''(d)) |^2 \right)^{1+\delta/2} } \\
	&= \lim_{d \rightarrow \infty} d^{-\delta/2} \frac{ \int_0^1 |z^{2\kappa} \sin^2(z^\kappa T'')|^{2+\delta} \mathrm{d}z }{ \left( \int_0^1 |z^{2\kappa} \sin^2(z^\kappa T'')|^2 \mathrm{d}z \right)^{1+\delta/2}} \\
	&= \lim_{d \rightarrow \infty} d^{-\delta/2} \frac{ \int_0^1 |z^{2\kappa} |^{2+\delta} \mathrm{d}z }{ \left( \int_0^1 |z^{2\kappa} |^2 \mathrm{d}z  \right)^{1+\delta/2}} = 0.
\end{align*}
Similar arguments verify \eqref{eq Tcond} for $T_{4i}$ and $T_{5i}$.  Now we can apply Theorem \ref{thm accept} with
\begin{align*}
	\mu 
	&= \lim_{d \rightarrow \infty} \sum_{i=1}^d T_{3i} + T_{4i} \\
	&= \lim_{d \rightarrow \infty} -\frac{1}{32} \sum_{i=1}^d \frac{t_i^2 \sin^2(L\theta_i)}{1-\tsfrac{1}{4}t_i} \\
	&= \lim_{d \rightarrow \infty} -\frac{1}{32} \sum_{i=1}^d t_i^2 \sin^2(L\theta_i) \\
	&= \lim_{d \rightarrow \infty} -\frac{l^4}{32} \sum_{i=1}^d d^{-1} (\tsfrac{i}{d})^{4\kappa} \sin^2((\tsfrac{i}{d})^\kappa T''(d)) \\
	&= \lim_{d \rightarrow \infty} -\frac{l^4}{32} \int_0^1 z^{4\kappa} \sin^2(z^\kappa T''(d)) \mathrm{d}z \\
	&= -\frac{l^4}{32} \frac{1}{2\pi} \int_0^{2\pi} \sin^2(z') \mathrm{d}z' \int_0^1 z^{4\kappa} \mathrm{d}z \\
	&= -\frac{l^4}{64(1+4\kappa)},
\end{align*}
and similarly,
\begin{align*}
	\sigma^2 &= \lim_{d \rightarrow \infty} \sum_{i=1}^d 2 T_{3i}^2 + 2 T_{4i}^2 + T_{5i}^2 \\
	&= \lim_{d \rightarrow \infty} \sum_{i=1}^d \frac{1}{32} t_i^2 \sin^4(L\theta_i) + \frac{1}{32} \frac{t_i^2 \sin^4(L\theta_i)}{(1-\tsfrac{1}{4}t_i)^2} + \frac{1}{64} \frac{t_i^2 \sin^2(2L\theta_i)}{1-\tsfrac{1}{4}t_i} \\
	&= \lim_{d \rightarrow \infty} \sum_{i=1}^d \frac{1}{16} t_i^2 \sin^4(L\theta_i) + \frac{1}{64} t_i^2 \sin^2(2L\theta_i) \\
	&= \lim_{d \rightarrow \infty} \sum_{i=1}^d \frac{l^4}{16} d^{-1} (\tsfrac{i}{d})^{4\kappa} \sin^4((\tsfrac{i}{d})^\kappa T''(d)) + \frac{l^4}{64} d^{-1} (\tsfrac{i}{d})^{4\kappa} \sin^2(2(\tsfrac{i}{d})^\kappa T''(d)) \\
	&= \lim_{d \rightarrow \infty} \frac{l^4}{16} \int_0^1 z^{4\kappa} \sin^4(z^\kappa T'') \mathrm{d}z + \frac{l^4}{64} \int_0^1 z^{4\kappa} \sin^2(2z^{\kappa}T'') \mathrm{d}z \\
	&= \frac{l^4}{16} \left( \frac{1}{2\pi} \int_0^{2\pi} \!\! \sin^4(z') \mathrm{d}z' \right) \int_0^1 \!\! z^{4\kappa} \mathrm{d}z + \frac{l^4}{64} \left( \frac{1}{2\pi} \int_0^{2\pi}\!\! \sin^2(z') \mathrm{d}z' \right) \int_0^1 \!\! z^{4\kappa} \mathrm{d}z \\
	&= \frac{l^4}{32(1+4\kappa)}.
\end{align*}
Hence $\frac{\mu}{\sigma} = -\sigma - \frac{\mu}{\sigma} = - \frac{l^2}{8 \sqrt{2} \sqrt{1+4\kappa}}$ and $\mu+\sigma^2/2 = 0$, so from Theorem \ref{thm accept} we obtain \eqref{eq hmcexpect}.

For the expected jump size, we apply Theorem \ref{thm jumpsize} with
$$
	U_1 = \frac{\tilde{g}_i^2}{\lambda_i^2} + \frac{g_i}{\tilde{\lambda}_i^2} = \frac{(1-\cos(L\theta_i))^2}{\lambda_i^2} + \frac{\sin^2(L\theta_i)}{\lambda_i^2 (1-\frac{1}{4}t_i)} \rightarrow \frac{2(1-\cos(\lambda_i T'))}{\lambda_i^2} + \mathrm{o}(d^{-1/2})
$$
as $d\rightarrow \infty$.  Also, it is straightforward to show that $\mu_i = \mathrm{o}(d^{-1})$ and $\sigma_i^2 = \mathrm{o}(d^{-1})$.  Hence
$$
	U_2 \rightarrow \mathrm{E}[\alpha(x,y)] = 2 \Phi\left( -\frac{l^2}{8\sqrt{2}\sqrt{1+4\kappa}} \right)
$$
as $d \rightarrow \infty$, and 
\begin{align*}
	|U_3| &\leq (\sigma_i^2 + \mu_i^2)^{1/2} \left( 3 \frac{\tilde{g}_i^4}{\lambda_i^4} + 3 \frac{g_i^2}{\tilde{\lambda}_i^4} + 6 \frac{\tilde{g}_i^2 g_i}{\lambda_i^2 \tilde{\lambda}_i^2} \right)^{1/2} \\
	&= (\sigma_i^2 + \mu_i^2)^{1/2} \frac{\sqrt{3}}{\lambda_i^2} \left( {\scriptstyle (1-\cos(L\theta_i))^4 + \frac{\sin^4(L\theta_i)}{(1-\tsfrac{1}{4}t_i)^2} + 2 \frac{(1-\cos(L\theta_i))^2 \sin^2(L\theta_i)}{1-\tsfrac{1}{4}t_i} } \right)^{1/2} \\
	&= \mathrm{o}(d^{-1/2}).
\end{align*}
Therefore, we obtain \eqref{eq hmcjump}.
\end{proof}

\bibliographystyle{imsart-number}
\bibliography{paper8bib} 

\end{document}